\theoremstyle{plain}
\newtheorem{theorem}{Theorem}[section]
\newtheorem{lemma}[theorem]{Lemma}
\newtheorem{corollary}[theorem]{Corollary}
\newtheorem{proposition}[theorem]{Proposition}
\newtheorem{example}[theorem]{Example}
\numberwithin{equation}{section}
\theoremstyle{definition}
\newtheorem{definition}[theorem]{Definition}
\newtheorem{assumption}[theorem]{Assumption}
\theoremstyle{remark}
\newtheorem{remark}[theorem]{Remark}
\newcommand{\R}{{\mathbb R}}
\newcommand{\intr}{\mathrm{int}}
\newcommand{\F}{{\mathbb F}}
\newcommand{\thstar}{\mathop{\theta^*}}
\newcommand{\zstar}{\mathop{z^*}}
\newcommand{\xstar}{\mathop{x^*}}
\newcommand{\Xstar}{\mathop{X^*}}
\newcommand{\E}{\mathop{\mathbb{E}}\nolimits}
\newcommand{\hth}{\hat{\theta}}
\newcommand{\hx}{\hat{x}}
\newcommand{\armax}{\mbox{argmax}}
\newcommand{\inte}{\mbox{int}}
\newcommand{\supp}{\mbox{supp}}
\newcommand{\Keywords}[1]{\par\noindent 
{\small{\em Keywords\/}: #1}}
\newcommand{\AMS}[1]{\par\noindent 
{\small{\bf AMS MSC 2010\/}: #1}}
\def\Ddots{\mathinner{\mkern1mu\raise\p@
\vbox{\kern7\p@\hbox{.}}\mkern2mu
\raise4\p@\hbox{.}\mkern2mu\raise7\p@\hbox{.}\mkern1mu}}
\title{\Large Parameter dependent optimal thresholds, indifference levels and inverse optimal stopping problems}
\author{Martin Klimmek\thanks{Mathematical Institute, University of Oxford, Oxford OX13LB \newline email: Martin.Klimmek@maths.ox.ac.uk}  \\
\small }
\date{\small \today}
\begin{document}
 \maketitle
\begin{abstract}
Consider the classic infinite-horizon problem of stopping a one-dimensional diffusion to optimise between running and terminal rewards and suppose we are given a parametrised family of such problems. We provide a general theory of parameter dependence in infinite-horizon stopping problems for which threshold strategies are optimal. The crux of the approach is a supermodularity condition which guarantees that the family of problems is indexable by a set valued map which we call the indifference map. This map is a natural generalisation of the allocation (Gittins) index, a classical quantity in the theory of dynamic allocation. Importantly, the notion of indexability leads to a framework for inverse optimal stopping problems.
\smallskip
\Keywords{{\it Inverse problem; inverse optimal stopping; threshold strategy, parameter dependence, comparative statics, generalised diffusion, Gittins index}}
\AMS{60G40; 60J60 }
\end{abstract}

\section{Introduction}
Consider the following classical optimal stopping problem. Given a discount parameter and a time-homogeneous diffusion started at a fixed point, we are asked to maximise an expected payoff which is the sum of a discounted running reward up until the stopping time and a terminal reward depending on the state of the diffusion at the stopping time. We call this problem the forward optimal stopping problem and the expected payoff under the optimal stopping rule the (forward) problem value. 

The problem can be generalised to a parametrised family of reward functions to give a parametrised family of forward problems. This generalisation is often natural. For instance, in economics we may be interested in the effect of changes in a dividend or a tax rate on the value of an investment and the optimal investment decision. In dynamic resource allocation problems, a parameter may act an index for different projects. In this context, the decision of which  project to engage requires an analysis of the parameter dependence of optimal stopping rules and problem values. 

The approach to solving forward problems in this article is motivated by previous work for the case when there is no running reward. In the case of perpetual American puts, Ekstr{\"o}m and Hobson \cite{HobsonEkstroem} establish convex duality relations between value functions and the Laplace transform of first hitting times of the underlying diffusion. In related work by Lu \cite{BingLu}, the approach is developed to establish duality when the parameter space is a discrete set of strikes. More generally, Hobson and Klimmek \cite{HobsonKlimmek:10} employ generalized convex analysis to establish duality between $\log$-transformed value functions and $\log$-transformed diffusion eigenfunctions for a general class of reward functions. The common strand in this previous work on inverse stopping problems is the conversion of a stochastic problem into a deterministic duality relation involving monotone optimizers. 

This article provides a unifying view of the monotone comparative statics results for optimal stopping developed previously and an extension to non-zero running rewards. We show that a supermodularity condition on the reward functions guarantees monotonicity of optimal thresholds in the parameter value. This monotonicity of the thresholds imposes a useful and natural order on families of parametrised stopping problems through a generalisation of the so-called allocation (or Gittins) index, an important quantity in the theory of dynamic allocation problems (see for instance Whittle \cite{Whittle} and Karatzas \cite{Karatzas}). We utilise the notion of indexability to solve parametrised families of stopping problems. 

As well as solving families of forward problems, we consider the problem of recovering diffusion processes consistent with given optimal stopping values.  `Inverse optimal stopping problems' find natural motivation in mathematical finance and economics. When there is no running reward, the problem has the interpretation of constructing models for an asset price process consistent with given perpetual American option prices. Now suppose instead that we are given an investor's valuation for a dividend bearing stock which may be liquidated for taxed capital gains. Given the valuation, we would like to recover the investor's model. Similar situations may arise in a real-options setup. A bidder for a resource extraction project may submit a range of bids for a project depending on an economic parameter. In this case, a regulator might naturally be interested in recovering the investor's model which underlies the bids. This article provides solutions to inverse problems in the presence of a non-zero running reward (or cost). We show that the value function does not contain enough information to recover a diffusion and that solutions to the inverse problem are parametrised by a choice of indifference (allocation) index. The indifference index can be interpreted as representing an investor's preferences with respect to remaining invested or liquidating. Given consistent preferences and valuations, it is possible to recover a diffusion model.

This article provides a direct approach to forward and inverse problems based on principles from monotone comparative statics and dynamic allocation. In spirit, the direct approach is related to recent seminal work by Dayanik and Karatzas \cite{DayanikKaratzas} and Bank and Baumgarten \cite{Bank}. The direct solution method in \cite{DayanikKaratzas}, based on the calculation of concave envelopes, is employed by Bank and Baumgarten \cite{Bank} to solve parameter-dependent forward problems. However, the method used in \cite{Bank} is restricted to problems with linear parameter dependence and requires calculation of an auxiliary function which transforms general two-sided stopping problems to one-sided threshold problems. The approach taken in this article is to focus on optimal stopping problems for which one-sided threshold strategies are optimal. This restriction (which is usual in the setting of dynamic allocation problems) leads to a tractable characterization of parameter-dependence. As an analysis of 
allocation indices and stopping problems, this article can be seen to extend the work of Karatzas \cite{Karatzas}. However, the aim here is not to prove the optimality of the `play-the-leader' policy for multi-armed bandits, but to generalise the approach to inverse optimal stopping problems introduced in \cite{HobsonKlimmek:10}. The fundamental aim is to establish qualitative principles that govern the relationship between data (e.g. prices), economic behaviour (e.g. investment indifference levels) and models (e.g. generalised diffusions). 


\section{Forward and the inverse problems}
Let $X=(X_t)_{t \geq 0}$ be a diffusion process on an interval $I$, let $\rho$ be a discount parameter. Let $G = \{G(x,\theta);\theta \in \Theta\}$ be a family of terminal reward functions and $c= \{c(x,\theta); \theta \in \Theta\}$ a family of running reward functions, both parametrised by a real parameter $\theta$ lying in an interval $\Theta$ with end-points $\theta_-$ and $\theta_+$. The classical approach in optimal stopping problems is to fix the parameter, i.e. $\Theta=\{\theta\}$, and calculate \[V(x)=\sup_{\tau}\E_{x} \left[\int_0^\tau e^{-\rho t} c(X_t,\theta) dt + e^{-\rho \tau} G(X_\tau,\theta) \right]\]
for $x \in \intr(I)$ using variational techniques, see for instance 
Bensoussan and Lions \cite{Lions}.

In contrast, we are interested in the case when the starting value is fixed and the parameter varies. Then the {\it forward problem} is to calculate $V \equiv \{V(\theta) \ ; \ \theta \in \Theta\}$ where 
\begin{equation} \label{eq:forward}
V(\theta)=\sup_{\tau} \E_{X_0}\left[\int_0^\tau e^{-\rho t} c(X_t,\theta) dt + e^{-\rho \tau} G(X_\tau,\theta) \right].
\end{equation}

We will assume that the process underlying the stopping problem is a regular one-dimensional diffusion processes characterised by a speed 
measure and a strictly increasing and continuous scale function. Such diffusions are `generalised' because the speed measure need not have a density. 

Let $I \subseteq \R$ be a finite or infinite interval with a left endpoint $a$ and right endpoint $b$. Let $m$ be a non-negative, non-zero Borel measure on $\R$ with $I=\supp(m)$. Let $s: I \rightarrow \R$ be a strictly increasing and continuous function. Let $x_0 \in I$ and let $B=(B_t)_{t \geq 0}$ be a Brownian motion started at $B_0=s(x_0)$ supported on a filtration $\F^B=({\mathcal F}_u^B)_{u\geq 0}$ with local time process
$\{ L_u^z ; u \geq 0, z \in \R \}$. Define $\Gamma$ to be the
continuous, increasing, additive functional
\[\Gamma_u = \int_{\R} L_u^z m(dz),\]
and define its right-continuous inverse by
\[A_t = \inf \{u : \Gamma_u > t \}. \]
If $X_t = s^{-1}(B(A_t))$ then $X=(X_t)_{t \geq 0}$ is a one-dimensional regular diffusion started at $x_0$ with speed measure $m$ and scale function $s$. Moreover, $X_t \in I$ almost surely for all $t \geq 0$. 

Let $H_x=\inf\{u:X_u=x\}$. Then for a fixed $\rho>0$ (see e.g. \cite{Salminen}),
\begin{equation} \label{eq:eigenfunction}
\xi(x,y)=\E_{x}[e^{-\rho H_y}]= \left\{\begin{array}{ll}
\frac{\varphi(x)}{\varphi(y)}  &\; x \leq y \\
\frac{\phi(x)}{\phi(y)} &\; x \geq y ,
\end{array}\right.
\end{equation}
where $\varphi$ and $\phi$ are respectively a strictly increasing and a strictly decreasing solution to the differential equation
\begin{equation} \label{eq:differentialsc}
\frac{1}{2} \frac{d}{dm} \frac{d}{ds} f = \rho f.
\end{equation}
In the smooth case, when $m$ has a density $\nu$ so that $m(dx)= \nu(x)dx$ and $s''$ is continuous,
(\ref{eq:differentialsc}) is equivalent to
\begin{equation} \label{eq:differentialsnice}
\frac{1}{2} \sigma^2(x) f''(x) + \alpha(x) f'(x) = \rho f(x),
\end{equation}
where \[ \nu (x)=  \sigma^{-2}(x) e^{M(x)}, \ \ s'(x)=e^{-M(x)}, \  \
M(x)=\int_{0-}^x 2 \sigma^{-2} (z) \alpha(z) dz. \]

We will call the solutions to (\ref{eq:differentialsc}) the $\lambda$-eigenfunctions of the diffusion. For a fixed diffusion with a fixed starting point we will scale $\varphi$ and $\phi$ so that $\varphi(X_0)=\phi(X_0)=1$. The boundary conditions of the differential equation (\ref{eq:differentialsc}) depend on whether the end-points of $I$ are inaccessible, absorbing or reflecting, see Borodin and Salminen \cite{borodin} for details. We will denote by $\inte(I)$ the interior of $I$ and its accessible boundary points and we will make the following  assumption about the boundary behaviour of $X$. 

\begin{assumption}
Either the boundary of $I$ is non-reflecting (absorbing or killing) or
$X$ is started at a reflecting end-point and the other end-point is 
non-reflecting.
\end{assumption} 

Now, for $\theta \in \Theta$, let 
\begin{equation} \label{d:resolvent}
R(x,\theta)=\E_x \left[\int_0^\infty e^{-\rho t} c(X_t,\theta) dt \right].
\end{equation}
Define $U:I\times \Theta \rightarrow \R$ by  $U(x,\theta)=G(x,\theta)-R(x,\theta)$ and for all $\theta \in \Theta$ and $x \in I$ let $c^\theta(x)=c(x,\theta)$ and $R^\theta(x)=R(x,\theta)$.  

\begin{assumption}
$\E_x \left[\int_0^\infty e^{-\rho s} |c^\theta(X_s)| ds  \right] < \infty$ for all $x \in \intr(I)$ and $\theta \in \Theta$.
\end{assumption}
Under our assumptions it is well-known (see for instance Alvarez \cite{Alvarez}) that $R^\theta: \intr(I) \rightarrow \R$ solves the differential equation 
\begin{equation} \label{eq:runningr}
\frac{1}{2} \frac{d}{dm} \frac{d}{ds} f = \rho f - c^\theta.
\end{equation}

\begin{example} \label{ex:ResGeom}
In some cases $R^\theta$ can be calculated directly. Let $\mu < \rho$ and let $dX_t=\sigma X_t dB_t + \mu X_t dt$ and $c(x,\theta)=x \theta$. Then 
$\E_x \left[\int_0^\infty e^{-\rho t} X_t \theta dt \right] dt=x \theta \int_0^\infty e^{(\mu-\rho)t} dt=\frac{x\theta}{\rho-\mu}$.
\end{example}

\begin{example} \label{ex:ResBessel}
Suppose $m(dx)=2x^2 dx$ and $s(x)=-1/x$. Then $X$ is known as the three-dimensional Bessel process and solves the SDE; $dX_t=dB_t+dt/X_t$. Let $c:\R^2 \rightarrow \R$ be defined $c(x,\theta)=\theta \cos(x)$ and $\rho=1/2$. Then $R^\theta$ solves $\frac{1}{2} f''(x)+f'(x)/x-\frac{1}{2} f(x)=-\theta \cos(x)$ with $f(0)=0$. The solution is 
$R^\theta(x)=\theta\left(\cos(x)-\frac{\sin(x)}{x}\right)$.
\end{example}

In order to rule out the case of negative value functions we also make the following assumption. 
\begin{assumption} \label{ass:Gcontinuous}
For all $\theta \in \Theta$, $x \rightarrow U(x,\theta)$ there exists $\hat{x} \in \intr(I)$ such that $U(\hat{x},\theta)>0$. 
\end{assumption}

\subsection{Summary of the main results}
Our main result for the forward problem can be summarised as follows.

{{\bf Solution to the forward problem:} {\it Given a generalised diffusion $X$, if $U(x,\theta)=G(x,\theta)-R(x,\theta)$ is $\log$-supermodular then a threshold strategy is optimal on an interval $(\theta_-, \theta_R)$ and an optimal finite stopping rule does not exist for $\theta > \theta_R$. Furthermore, if $U$ is sufficiently regular and $V$ is differentiable at $\theta \in (\theta_-,\theta_R)$ then \[V'(\theta)=\frac{U_\theta(\xstar(\theta),\theta)}{\varphi(\xstar(\theta))},\] where $\xstar: \Theta \rightarrow I$ is a monotone increasing function such that $\tau=H_{\xstar(\theta)}$ is the optimal stopping rule.}}

Now suppose that we are given $V=\{V(\theta) \ ; \ \theta \in \Theta\}$ and $G=\{G(x,\theta) \ ; \ x \in \R, \  \theta \in \Theta\}$, $c=\{c(x) \ ; \ x \in \R\}$ and $X_0$. Then the {\it inverse problem} is to construct a diffusion $X$ such that $V_X=V$ is the value function corresponding to an optimal threshold strategy. (To keep the inverse problem tractable we focus on the case when the running cost is not parameter dependent.) Our analysis hinges on specifying the parameters for which it is optimal to stop immediately (i.e. $\tau=0$) for a given level of the underlying diffusion. If we consider $V$ to be the value of an investment as a function of a parameter (e.g. a level of capital gains tax), then the indifference map specifies the parameters for which an investor would be indifferent whether to invest or not as it would be optimal to sell immediately.

The indifference map is a natural extension of the allocation (Gittins) index which occurs naturally in the theory of multi-armed bandits. We provide a novel application of this classical quantity in the context of inverse investment problems and real option theory. The indifference map can be seen to represent investor preferences with respect to liquidating for capital gains or remaining invested for future returns. Depending on the valuation of an investment as a function of the parameter, we will show how to recover diffusion models for the underlying risky asset consistent with given preferences (indifference maps).

{{\bf Solution to the inverse problem:} {\it Solutions to the inverse problem are parametrised by a choice of allocation index $\thstar: I \rightarrow \Theta$: The functions $\varphi$ and $R$ defined 
\[\varphi(x)= \frac{G_\theta(x,\thstar(x))}{V'(\thstar(x))}, \ \ \ R(x)= G(x,\thstar(x))-\varphi(x) V(\thstar(x)),\]
determine the speed measure and scale function of the solution through equations (\ref{eq:differentialsc}) and (\ref{eq:runningr}). 
}}

\section{The forward problem: threshold strategies}
Threshold strategies are a natural class of candidates for the optimal stopping time in the forward problem. Our first aim is to establish necessary and sufficient conditions for the optimality of a threshold strategy.

By the strong Markov property of one-dimensional diffusions the value function for the optimal stopping problem can be decomposed into the reward from running the diffusion forever and an {\it early stopping reward}.
\begin{equation} \label{eq:strongmarkov}
V(x,\theta)=R(x,\theta)+\sup_\tau \E_{x}[e^{-\rho \tau}(G(X_\tau,\theta)-R(X_\tau,\theta))].
\end{equation}
We will let $E(x,\theta)=V(x,\theta)-R(x,\theta)$ denote the {\it optimal early stopping reward} and let $U(x,\theta)=G(x,\theta)-R(x,\theta)$ denote the {\it early stopping reward function}. 

\begin{lemma} \label{l:oneside}
Stopping at the first hitting time of $z \geq X_0$, $z \in \intr(I)$ is optimal if and only if $\frac{U(y,\theta)}{\varphi(y)}$ attains its global maximum on $\intr(I)$ at $z$.
\end{lemma}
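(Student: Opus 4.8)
The plan is to reduce, via the strong Markov decomposition (\ref{eq:strongmarkov}), to the early stopping reward $E(X_0,\theta)=\sup_\tau\E_{X_0}[e^{-\rho\tau}U(X_\tau,\theta)]$ ($\theta$ fixed), and to lean on one clean computation: since $z\geq X_0$, formula (\ref{eq:eigenfunction}) and the normalisation $\varphi(X_0)=1$ give $\E_{X_0}[e^{-\rho H_z}]=1/\varphi(z)$, and as $X_{H_z}=z$ on $\{H_z<\infty\}$ with no contribution from $\{H_z=\infty\}$, stopping at $H_z$ realises exactly $U(z,\theta)/\varphi(z)$. Write $M:=U(z,\theta)/\varphi(z)$.

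For sufficiency: if $U(\cdot,\theta)/\varphi$ is maximised over $\intr(I)$ at $z$ then $U(y,\theta)\leq M\varphi(y)$ for all $y$, and $M>0$ by Assumption \ref{ass:Gcontinuous}. The process $N_t:=e^{-\rho t}\varphi(X_t)$ is a non-negative local martingale — this is precisely what it means for $\varphi$ to solve (\ref{eq:differentialsc}), via It\^o's formula after putting $X$ in natural scale (in the form valid for a speed measure with no density) — hence a supermartingale; stopping at $\tau\wedge t$, using $\varphi\geq0$, and letting $t\to\infty$ gives $\E_{X_0}[e^{-\rho\tau}\varphi(X_\tau)\Indi_{\{\tau<\infty\}}]\leq1$ for every $\tau$, so
\[\E_{X_0}\!\left[e^{-\rho\tau}U(X_\tau,\theta)\right]\;\leq\;M\,\E_{X_0}\!\left[e^{-\rho\tau}\varphi(X_\tau)\Indi_{\{\tau<\infty\}}\right]\;\leq\;M,\]
and $\tau=H_z$ attains $M$.

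For necessity: assume $H_z$ is optimal, so $E(X_0,\theta)=M$. For $y\in\intr(I)$ with $y\geq X_0$ the threshold $H_y$ is admissible, whence $U(y,\theta)/\varphi(y)\leq E(X_0,\theta)=M$. For $y<X_0$ (and $z>X_0$) I would compare with $\sigma:=H_y\wedge H_z$: optional sampling of the \emph{bounded} martingales $e^{-\rho(t\wedge\sigma)}\varphi(X_{t\wedge\sigma})$ and $e^{-\rho(t\wedge\sigma)}\phi(X_{t\wedge\sigma})$ (bounded since $X\in[y,z]$ before $\sigma$) gives $p\varphi(z)+q\varphi(y)=1=p\phi(z)+q\phi(y)$ with $p:=\E_{X_0}[e^{-\rho H_z}\Indi_{\{H_z<H_y\}}]>0$, $q:=\E_{X_0}[e^{-\rho H_y}\Indi_{\{H_y<H_z\}}]>0$, and the value of $\sigma$ equal to $pU(z,\theta)+qU(y,\theta)$. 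Subtracting $M=M\bigl(p\varphi(z)+q\varphi(y)\bigr)$ and using $U(z,\theta)=M\varphi(z)$,
\[pU(z,\theta)+qU(y,\theta)-M\;=\;q\varphi(y)\left(\frac{U(y,\theta)}{\varphi(y)}-M\right),\]
which optimality of $H_z$ forces to be $\leq0$; as $q\varphi(y)>0$ this gives $U(y,\theta)/\varphi(y)\leq M$, so $z$ is a global maximiser.

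The step I expect to be the real obstacle is the analytic bookkeeping behind the supermartingale argument for a generalised diffusion: that $e^{-\rho t}\varphi(X_t)$ is genuinely a local martingale when $m$ has no density, that optional sampling is legitimate at an arbitrary (possibly infinite) stopping time, and that the behaviour of $X$ at the accessible or inaccessible end-points allowed by the boundary assumption does not spoil these estimates. A lesser point is the degenerate case $z=X_0$, where the two-sided comparison collapses and one instead argues from the structure of $E(\cdot,\theta)$, which solves (\ref{eq:differentialsc}) on the continuation region and, with the boundary behaviour, is a multiple of $\varphi$ there, whence $U\leq E=M\varphi$ on $(a,z]$.
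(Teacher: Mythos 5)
Your proof is correct and follows essentially the same route as the paper's: the same supermartingale bound on $e^{-\rho t}\varphi(X_t)$ combined with $U(\cdot,\theta)\leq M\varphi(\cdot)$ for sufficiency, and for necessity the same comparison of $H_z$ against the two-sided exit time $H_y\wedge H_z$ for $y<X_0$, which you phrase as a direct inequality where the paper phrases it as a strict improvement of $H_{(y,z)}$ over $H_z$. Your closing remarks correctly identify the technical points the paper leaves implicit (the local-martingale property of $e^{-\rho t}\varphi(X_t)$ for a generalised speed measure, optional sampling at possibly infinite times, and the degenerate case $z=X_0$, where both your two-sided comparison and the paper's strict-inequality computation collapse because the coefficient $q$ vanishes).
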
 

\begin{proof}
Suppose that the global maximum is achieved at $z \geq X_0$. Let 
\begin{equation*}
\hat{E}(\theta)= \frac{U(z,\theta)}{\varphi(z)}.
\end{equation*}
We will show that $E(X_0,\theta)=\hat{E}(\theta)$. On the one hand, $E(X_0,\theta) \geq \hat{E}(\theta)$ since the supremum over all stopping times is larger than the value of stopping upon hitting a given threshold. Moreover $e^{-\rho t}\varphi(X_t)$ is a non-negative local martingale hence a super-martingale. We have that for all stopping times $\tau$,
\[1 \geq \E_{X_0}\left[e^{-\rho \tau} \varphi(X_{\tau})\right] \geq \E_{X_0} \left[e^{-\rho \tau} \frac{U(X_\tau,\theta)}{\hat{E}(\theta)}\right],\]
and hence $\hat{E}(\theta) \geq \E_{X_0}[e^{-\rho \tau}(G(X_\tau,\theta)-R(X_\tau,\theta))]$ for all stopping times $\tau$. Hence $H_z$ is optimal.

For the converse, suppose that there exists an $z' \in \intr(I)$, $z' \neq z$ such that $\frac{U(z',\theta)}{\varphi(z')} >\frac{U(z,\theta)}{\varphi(z)}$. We will show that there exists a stopping time which is better than $H_z$. First,
if $z' \geq X_0$ then stopping at $\tau =H_{z'}$ is a better strategy than stopping at $\tau=H_z$. Now suppose $z'< X_0$. Then
\begin{eqnarray*}
 U(z,\theta) \E_{X_0}[e^{-\rho H_z}]&=& U(z,\theta)\E_{X_0}[e^{-\rho H_z} 1_{H_z < H_{z'}}] + U(z,\theta) \E_{X_0}[e^{-\rho H_{z'}} 1_{H_{z'}<H_z}] \E_{z'}[e^{-\rho H_z}] \\
&=& U(z,\theta)\E_{X_0}[e^{-\rho H_z} 1_{H_z < H_{z'}}] +  U(z',\theta) \E_{X_0}[e^{-\rho H_{z'}} 1_{H_{z'}<H_z}] \frac{U(z,\theta)/\varphi(z)}{U(z',\theta)/\varphi(z')} \\
&<& U(z,\theta)\E_{X_0}[e^{-\rho H_z} 1_{H_z < H_{z'}}] +  U(z',\theta) \E_{X_0}[e^{-\rho H_{z'}} 1_{H_{z'}<H_z}] ,
\end{eqnarray*}
so stopping at $H_{(z',z)}$ is better than stopping at $H_z$. 
\end{proof}

\begin{remark} \label{r:parallel}
There is a parallel result for stopping at a threshold below $X_0$. A threshold below $X_0$ is optimal if and only if $\frac{U}{\phi}$ attains a global maximum below $X_0$.
\end{remark}

\begin{example} \label{ex:fullgeo}
Recall Example \ref{ex:ResGeom} and let $X$ be a Geometric Brownian Motion started at $1$ with volatility parameter $\sigma$ and drift parameter $\mu<\rho$. Suppose $\Theta=\R^+$, $G(\theta)=\theta$ and $c(x,\theta)=x$. Then $U(x,\theta)=G(x,\theta)-R(x,\theta)=\theta-x/(\rho-\mu)$. 
$U(x,\theta)$ is decreasing so we look for a stopping threshold below $1$. $\phi(x)=x^{-\sqrt{\nu^2+2\rho/\sigma^2}-\nu}$ for $0 < x \leq 1$, where $\nu=\mu/\sigma^2-1/2$. Let $c_-=\sqrt{\nu^2+2\rho/\sigma^2}+\nu$ and $x(\theta)=\frac{c_- \theta (\rho-\mu)}{1+c_-}$. If $0 < x(\theta) \leq 1$ then $x(\theta)$ is the optimal stopping threshold. If $x(\theta)=0$ then it is optimal to `wait forever'. If $x(\theta)>1$ then it is optimal to stop immediately. 
\end{example}

The following Lemma shows that if a threshold strategy is optimal then the optimal threshold is either above or below the starting point. This rules out the case that both an upper threshold and a lower threshold are optimal for a fixed parameter.

\begin{lemma} \label{l:triangle}
For a fixed parameter $\theta$, let $U(s)=U(s,\theta)$. Let $\displaystyle \triangle_-=\{z:z \in \armax_s [U(s)/\phi(s)]\}$ and 
$\displaystyle \triangle_+=\{z:z \in \armax_s [U(s)/\varphi(s)]\}$. If 
$x \in \triangle_+$ and $y \in \triangle_-$ then 
$x \leq y$.
\end{lemma}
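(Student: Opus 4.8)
The plan is to argue by contradiction. Suppose $x \in \triangle_+$, $y \in \triangle_-$, and $x > y$. The idea is to compare the value of the upper-threshold strategy $H_x$ with a two-sided strategy that also exploits the lower threshold $y$, and show that the latter strictly improves on $H_x$, contradicting the optimality of $H_x$ guaranteed by Lemma \ref{l:oneside}. (If either $\triangle_+$ or $\triangle_-$ is empty there is nothing to prove, so we may assume both strategies are genuine candidates; note also that by Assumption \ref{ass:Gcontinuous} the relevant maxima are strictly positive, so all the ratios below are well-defined and positive.)

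First I would recall that $\xi(X_0,x) = \varphi(X_0)/\varphi(x)$ for $x \geq X_0$ and $\xi(X_0,y)=\phi(X_0)/\phi(y)$ for $y \leq X_0$, with the normalisation $\varphi(X_0)=\phi(X_0)=1$; and that for $y < X_0 < x$ the usual first-exit decomposition for the diffusion on $(y,x)$ gives
\[
\E_{X_0}[e^{-\rho H_x}] = \E_{X_0}[e^{-\rho H_x}1_{H_x<H_y}] + \E_{X_0}[e^{-\rho H_y}1_{H_y<H_x}]\,\E_y[e^{-\rho H_x}],
\]
and similarly with the roles of $x$ and $y$ swapped. The key step is then the same ``splitting'' computation as in the proof of Lemma \ref{l:oneside}: write $U(x,\theta)\,\E_{X_0}[e^{-\rho H_x}]$ using the decomposition above, and on the event $\{H_y<H_x\}$ replace $\E_y[e^{-\rho H_x}] = \varphi(y)/\varphi(x)$ so that the second term becomes $U(x,\theta)\,\E_{X_0}[e^{-\rho H_y}1_{H_y<H_x}]\,\varphi(y)/\varphi(x)$. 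Since $U(x,\theta)/\varphi(x) \leq U(s,\theta)/\varphi(s)$ for all $s$ — in particular this does not immediately help at $s=y$ — I instead compare against $y$ through $\triangle_-$: because $y \in \triangle_-$ we have $U(y,\theta)/\phi(y) \geq U(x,\theta)/\phi(x)$, and because $x \in \triangle_+$ we have $U(x,\theta)/\varphi(x) \geq U(y,\theta)/\varphi(y)$. Multiplying these two inequalities gives
\[
\frac{U(x,\theta)}{\phi(x)}\cdot\frac{U(x,\theta)}{\varphi(x)} \;\leq\; \frac{U(y,\theta)}{\phi(y)}\cdot\frac{U(y,\theta)}{\varphi(y)},
\]
hence $U(x,\theta)^2 \xi(\cdot) \leq U(y,\theta)^2\xi(\cdot)$ after inserting the hitting-time ratios; this is the quantitative relation I would exploit to show that stopping at whichever of $H_x$, $H_y$ the diffusion reaches first — i.e. at $H_{\{y,x\}}$ — dominates $H_x$, exactly as in the final display of Lemma \ref{l:oneside}'s proof but carried out symmetrically on both exit events. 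Concretely, in the decomposition of $U(x,\theta)\E_{X_0}[e^{-\rho H_x}]$ the term on $\{H_y<H_x\}$ is bounded above by $U(y,\theta)\,\E_{X_0}[e^{-\rho H_y}1_{H_y<H_x}]$ using $U(x,\theta)\varphi(y)/\varphi(x) \leq U(y,\theta)$ (the $\triangle_+$ inequality, rearranged), so the two-sided rule beats $H_x$ unless $\Prob_{X_0}(H_y<H_x)=0$, which cannot happen for $y<X_0<x$ interior; a symmetric argument beats $H_y$ unless $\Prob_{X_0}(H_x<H_y)=0$. The only way to avoid a strict improvement on both is $x=X_0$ or $y=X_0$, but then $x=y=X_0$ contradicts $x>y$.

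The main obstacle I anticipate is handling the boundary/degenerate cases cleanly: what if $x = X_0$ (so the ``upper'' threshold is the starting point and $\E_{X_0}[e^{-\rho H_x}]=1$), or if the global maxima defining $\triangle_\pm$ are attained only in a limiting sense at an inaccessible endpoint. For $x=X_0$ the inequality $x\leq y$ forces $y \geq X_0$, but $y\in\triangle_-$ means the $U/\phi$-maximum lies at or below $X_0$; combined with $y\geq X_0$ this gives $y=X_0=x$, consistent with the claim. For $y=X_0$ the symmetric remark applies. The genuinely delicate point is the strict inequality in the splitting step — it relies on $U(z',\theta)/\varphi(z') \neq U(z,\theta)/\varphi(z)$ type strictness, whereas here the relevant ratios could a priori be equal (if $x,y$ both happen to lie in $\triangle_+\cap\triangle_-$). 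In that borderline case the inequalities above are equalities, both $H_x$ and the two-sided rule achieve the optimum, and one must instead observe directly that a point simultaneously maximising $U/\varphi$ and $U/\phi$ while lying strictly between forces $\varphi/\phi$ to be constant on a nondegenerate interval, impossible for a strictly increasing $\varphi$ and strictly decreasing $\phi$; hence $x=y$. Assembling these cases gives $x\leq y$ in all situations.
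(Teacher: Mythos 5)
Your central displayed inequality is not a consequence of the hypotheses, and the probabilistic comparison built on it points in the wrong direction. From $y\in\triangle_-$ and $x\in\triangle_+$ you have $U(y)/\phi(y)\geq U(x)/\phi(x)$ and $U(x)/\varphi(x)\geq U(y)/\varphi(y)$; multiplying these term by term gives
\[
\frac{U(y)}{\phi(y)}\cdot\frac{U(x)}{\varphi(x)} \;\geq\; \frac{U(x)}{\phi(x)}\cdot\frac{U(y)}{\varphi(y)},
\]
i.e.\ after cancelling $U(x)U(y)>0$ (positive by Assumption \ref{ass:Gcontinuous} at the maximisers), simply $\varphi(y)/\phi(y)\geq\varphi(x)/\phi(x)$ --- which already contradicts the strict monotonicity of $\varphi/\phi$ when $y<x$, and is in fact the paper's entire proof. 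What you wrote instead, $U(x)^2/(\phi(x)\varphi(x))\leq U(y)^2/(\phi(y)\varphi(y))$, pairs the factors incorrectly and does not follow from the two argmax inequalities: writing $A=U(y)/\phi(y)$, $B=U(x)/\phi(x)$, $C=U(x)/\varphi(x)$, $D=U(y)/\varphi(y)$, you know $B/A\leq 1$ and $C/D\geq 1$, so the sign of $BC-AD$ is indeterminate. Worse, the step you then lean on --- ``$U(x,\theta)\varphi(y)/\varphi(x)\leq U(y,\theta)$ (the $\triangle_+$ inequality, rearranged)'' --- is rearranged the wrong way: $U(x)/\varphi(x)\geq U(y)/\varphi(y)$ gives $U(x)\varphi(y)/\varphi(x)\geq U(y)$. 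Consequently the two-sided rule $H_{y}\wedge H_{x}$ is dominated by $H_x$, not the reverse; this is as it must be, since by Lemma \ref{l:oneside} $H_x$ is already optimal among \emph{all} stopping times, so no rule can strictly improve on it and no contradiction can be extracted along these lines.

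There is also a structural problem: the lemma nowhere assumes $y<X_0<x$. As Example \ref{ex:triangle} makes clear, the statement concerns the locations of the maximisers of the two deterministic functions $U/\varphi$ and $U/\phi$ and holds irrespective of where $X_0$ sits; your argument needs $X_0$ strictly between $y$ and $x$ both for the first-exit decomposition and to invoke optimality of $H_x$ and $H_y$ simultaneously, so even with the sign errors repaired it would cover only one configuration. The remedy is to drop the stopping-time machinery entirely: the two argmax inequalities combined as above, together with the strict monotonicity of $\varphi/\phi$, give $x\leq y$ in two lines, which is exactly the paper's argument.
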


\begin{proof}
Suppose that $y<x$. It follows that 
\[\frac{\varphi(y)}{\phi(y)}=\frac{G(y,\theta)/ \phi(y)}{G(y,\theta) /\varphi(y)} > \frac{G(x,\theta) / \phi(x)}{G(x,\theta) / \varphi(x)}=\frac{\varphi(x)}{\phi(x)},\]
contradicting the fact that $\frac{\varphi}{\phi}$ is strictly increasing.
\end{proof}

\begin{example} \label{ex:triangle}
Let $X$ be Brownian Motion on $[0,2\pi]$ killed at $0$ and at $2\pi$. Let $c \equiv 0$ and $\Theta=\R^+$ and $G(x,\theta)=\theta {\big |}\sinh(x \sin(x)){\big |}$ and suppose $\rho=1/2$. Then $\varphi(x)=\sinh(x)$ and $\phi(x)=\sinh(2\pi-x)$. Now fix $\theta=1$ and define $\triangle_+$ and $\triangle_-$ as in Lemma \ref{l:triangle}. We calculate $\triangle_+=\{\pi/2,3\pi/2\}$ and $\triangle_- \approx \{5.14\}$. If $X_0$ lies to the left (right) of an element in $\triangle_+ $ $(\triangle_-)$ then an upper (lower) threshold is optimal. If $X_0$ lies between the largest element in $\triangle_+$ and the smallest element in $\triangle_-$ then a threshold strategy is not optimal.  
\begin{figure}[!ht]
\begin{center}
\includegraphics[height=4.5cm,width=7cm]{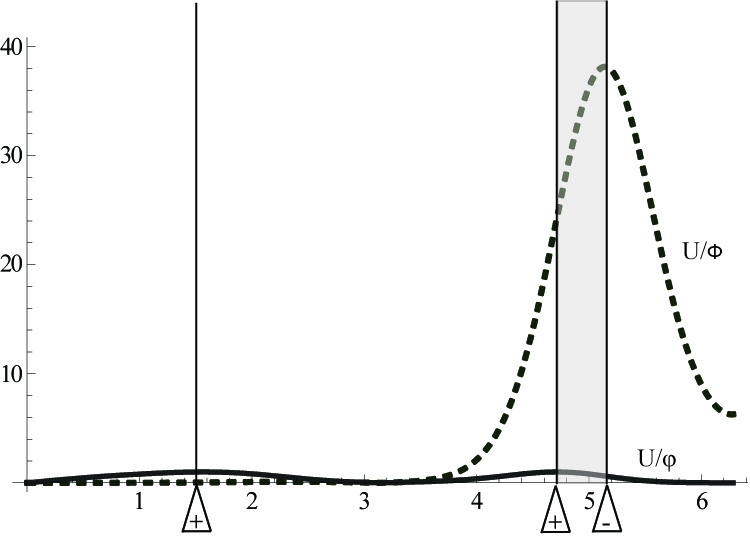}
\caption{Picture for $\theta=1$. $\frac{U}{\phi}$ is represented by the dashed line and $\triangle_-$ is a singleton. $\frac{U}{\varphi}$ is represented by the solid line and $\triangle_+$ consists of two points.
There is no optimal threshold strategy if $X_0$ lies in the shaded region.}
\end{center}
\end{figure}
\end{example}
In general, given a family of forward problems over an interval $\Theta$, we may find that threshold stopping is optimal on the whole interval $\Theta$, on a subset of $\Theta$ or nowhere on $\Theta$. We will temporarily assume that the forward problem (\ref{eq:forward}) is such that a threshold strategy is optimal on the whole parameter space.  Later, in Section \ref{s:monotone} we will see how to relax the assumption.
 
\begin{assumption} \label{ass:threshold}
For all $\theta \in \Theta$ it is optimal to stop at a threshold above $X_0$.
\end{assumption}
There is, as will always be the case, a parallel theory when the optimal thresholds are below $X_0$, compare Remark \ref{r:parallel}.

\subsection{The envelope theorem}
We will now derive our main result for the parameter dependence of the value function through an envelope theorem. The aim is to derive an expression for the derivative 
of $V$. 

For a fixed parameter $\theta$ let $\displaystyle \Xstar(\theta)=\armax_{x \in \intr(I)}\left[\frac{U(x,\theta)}{\varphi(x)}\right]$. Then $\Xstar(\theta)$ is the set of possible threshold strategies for a fixed parameter $\theta$. We will let $\Xstar(\Theta)$ denote the collection of all threshold strategies for the parameter space. Letting 
$x_+=\sup\{x:x \in \Xstar(\Theta)\}$, we have that  $\Xstar(\Theta) \subseteq [X_0,x_+]$. Recall the definition of the {\it early stopping reward}. We abuse the notation slightly by setting $E(\theta)=V(X_0,\theta)-R(X_0,\theta)$, making the dependence on the starting value implicit. Let us also set $\eta(\theta)=\log(E(\theta))$. The following Proposition follows from an envelope theorem, see Corollary 4 in Segal and Milgrom, \cite{Milgrom}.

\begin{proposition} \label{p:envelope}
If $[X_0,x_+] \subseteq \intr(I)$, $U(x,\theta)$ is upper-semicontinuous in $x$ and $U_\theta(x,\theta)$ is continuous on $[X_0,x_+] \times \Theta$ then $V$ is Lipschitz continuous on $(\theta_-,\theta_+)$ and the one-sided derivatives are given by 
\begin{eqnarray*}
E'(\theta-)&=&\min_{x(\theta) \in \Xstar(\theta)} \frac{U_\theta(x(\theta),\theta)}{\varphi(x(\theta))} \\
E'(\theta+)&=&\max_{x(\theta) \in \Xstar(\theta)} \frac{U_\theta(x(\theta),\theta)}{\varphi(x(\theta))}.
\end{eqnarray*}
$E$ is differentiable at $\theta$ if and only if $\left\{\frac{U_\theta(x,\theta)}{\varphi(x)} : x(\theta) \in \Xstar(\theta)\right\}$ is a singleton. In particular we then have
\begin{equation} \label{eq:logequiv}
\frac{d}{d\theta}\eta(\theta)=u_\theta(x(\theta),\theta),
\end{equation}
for $x(\theta) \in \Xstar(\theta)$ where $u(x,\theta)=\log(U(x,\theta))$. 
\end{proposition}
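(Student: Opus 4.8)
The plan is to read Proposition \ref{p:envelope} as an instance of the Milgrom--Segal envelope theorem (Corollary 4 of \cite{Milgrom}) applied to a family of static maximisation problems, and then to carry out the paper-specific computation relating $E'$ to $\eta' = (\log E)'$. The first step is to rewrite the early stopping reward as a pointwise maximum. Since $\varphi(X_0)=1$, Lemma \ref{l:oneside} gives $E(\theta)=\sup_{x\in\intr(I)}g(x,\theta)$ with $g(x,\theta):=U(x,\theta)/\varphi(x)$, and $\Xstar(\theta)$ is precisely the maximiser set. By Assumption \ref{ass:threshold} the supremum is attained for every $\theta$, by the definition of $x_+$ we have $\Xstar(\theta)\subseteq[X_0,x_+]$, and by hypothesis $[X_0,x_+]\subseteq\intr(I)$, so $[X_0,x_+]$ is compact and $E(\theta)=\max_{x\in[X_0,x_+]}g(x,\theta)$ is a maximum over a \emph{fixed} compact set. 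This is the form to which the envelope theorem applies, with choice variable $x$ and parameter $\theta$.

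Next I would check the hypotheses, reproving the key steps directly. The function $\varphi$ is continuous and strictly positive, hence bounded away from $0$ on $[X_0,x_+]$; thus $g(\cdot,\theta)$ is upper-semicontinuous in $x$, while $g_\theta(x,\theta)=U_\theta(x,\theta)/\varphi(x)$ is continuous on $[X_0,x_+]\times\Theta$, so for each fixed $x$ the map $\theta\mapsto g(x,\theta)$ is $C^1$ with $g(x,\theta)=g(x,\theta_0)+\int_{\theta_0}^\theta g_\theta(x,s)\,ds$. On any compact $[\alpha,\beta]\subset\Theta$, $L:=\sup_{[X_0,x_+]\times[\alpha,\beta]}|g_\theta|<\infty$, so $|g(x,\theta)-g(x,\theta')|\le L|\theta-\theta'|$ uniformly in $x$, and therefore $|E(\theta)-E(\theta')|\le\sup_x|g(x,\theta)-g(x,\theta')|\le L|\theta-\theta'|$; hence $E$ (and $V=R(X_0,\cdot)+E$) is locally Lipschitz on $(\theta_-,\theta_+)$. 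For one side of the derivative formulas, inserting a fixed $x^*\in\Xstar(\theta)$ into $E(\theta\pm\delta)\ge g(x^*,\theta\pm\delta)=E(\theta)\pm\delta\,g_\theta(x^*,\theta)+o(\delta)$ yields $E'(\theta+)\ge g_\theta(x^*,\theta)$ and $E'(\theta-)\le g_\theta(x^*,\theta)$ for every such $x^*$, hence $E'(\theta+)\ge\max_{x^*\in\Xstar(\theta)}g_\theta(x^*,\theta)$ and $E'(\theta-)\le\min_{x^*\in\Xstar(\theta)}g_\theta(x^*,\theta)$; the extrema are attained because $\Xstar(\theta)$ is closed in the compact set $[X_0,x_+]$ and $g_\theta(\cdot,\theta)$ is continuous.

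The reverse inequalities are where the real work lies, and this is the step I expect to be the main obstacle: one needs upper-hemicontinuity of $\theta\mapsto\Xstar(\theta)$ despite $U$ being only u.s.c.\ in $x$. I would obtain it by a closed-graph argument: if $\theta_n\to\theta$, $x_n\in\Xstar(\theta_n)$, $x_n\to\bar x$, then $g(x_n,\theta_n)\ge g(y,\theta_n)\to g(y,\theta)$ for every $y$, while $\limsup_n g(x_n,\theta_n)\le g(\bar x,\theta)$ by joint upper-semicontinuity of $g$ (which follows from u.s.c.\ in $x$ together with the jointly continuous integral term $\int_{\theta_0}^\theta g_\theta\,ds$), so $\bar x\in\Xstar(\theta)$. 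Then, taking $\delta_n\downarrow0$ that realises $E'(\theta+)$ and $x_n\in\Xstar(\theta+\delta_n)$ with $x_n\to\bar x\in\Xstar(\theta)$ along a subsequence, the mean value theorem gives $E(\theta+\delta_n)-E(\theta)\le g(x_n,\theta+\delta_n)-g(x_n,\theta)=\delta_n\,g_\theta(x_n,\tilde\theta_n)$ with $\tilde\theta_n\in(\theta,\theta+\delta_n)$, whence $E'(\theta+)\le g_\theta(\bar x,\theta)\le\max_{x^*\in\Xstar(\theta)}g_\theta(x^*,\theta)$ by continuity of $g_\theta$; combined with the lower bound above this forces equality, and the existence of $E'(\theta+)$. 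The left derivative is handled symmetrically with $\delta_n\uparrow0$ and $\min$ in place of $\max$.

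It remains to assemble the last two claims. Since $E'(\theta-)=\min$ and $E'(\theta+)=\max$ over $\Xstar(\theta)$, $E$ is differentiable at $\theta$ if and only if these agree, i.e.\ if and only if $\{U_\theta(x,\theta)/\varphi(x):x(\theta)\in\Xstar(\theta)\}$ is a singleton. For the logarithmic identity, Assumption \ref{ass:Gcontinuous} provides $\hat x\in\intr(I)$ with $U(\hat x,\theta)>0$, so $E(\theta)\ge U(\hat x,\theta)/\varphi(\hat x)>0$ and $\eta=\log E$ is well defined, with $\eta'(\theta)=E'(\theta)/E(\theta)$ wherever $E$ is differentiable. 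Substituting $E(\theta)=U(x(\theta),\theta)/\varphi(x(\theta))$ and $E'(\theta)=U_\theta(x(\theta),\theta)/\varphi(x(\theta))$ for any $x(\theta)\in\Xstar(\theta)$, the factors $1/\varphi(x(\theta))$ cancel and $\eta'(\theta)=U_\theta(x(\theta),\theta)/U(x(\theta),\theta)=u_\theta(x(\theta),\theta)$ with $u=\log U$, which is (\ref{eq:logequiv}).
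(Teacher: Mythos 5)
Your proof is correct and follows essentially the same route as the paper: the paper simply delegates the Lipschitz continuity, the one-sided derivative formulas and the differentiability criterion to Corollary 4 of Segal and Milgrom, and then obtains (\ref{eq:logequiv}) by combining $E'(\theta)=U_\theta(x(\theta),\theta)/\varphi(x(\theta))$ with $E(\theta)=U(x(\theta),\theta)/\varphi(x(\theta))$ from Lemma \ref{l:oneside}, exactly as in your last paragraph. The only difference is that you reprove the cited envelope theorem in situ (uniform Lipschitz bound, lower bounds from inserting a fixed maximiser, upper bounds via upper hemicontinuity of $\Xstar$ plus the mean value theorem), which makes the argument self-contained but is not a different method.
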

\begin{remark}
Equation (\ref{eq:logequiv}) follows by combining the equations
$E'(\theta)=\frac{U_\theta(x(\theta),\theta)}{\varphi(x(\theta)}$ (a consequence of the envelope theorem in Milgrom \cite{Milgrom}) and 
$E(\theta)=\frac{U(x(\theta),\theta)}{\varphi(x(\theta))}$
(Lemma \ref{l:oneside}). 
\end{remark}
\begin{remark}
The condition $[X_0,x_+] \subseteq \intr(I)$ is satisfied if the boundary points of $I$ are accessible.
\end{remark}

\begin{corollary} \label{c:representation}
If the conditions in Proposition \ref{p:envelope} are satisfied then for any $\theta,\theta' \in \Theta$, \[E(\theta)-E(\theta')=\int_{\theta'}^\theta \frac{U_\theta(x(s),s)}{\varphi(x(s))} ds,\]
where $x(s)$ is a selection from $\Xstar(s)$.
\end{corollary}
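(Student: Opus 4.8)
The plan is to obtain the integral representation as an immediate consequence of Proposition \ref{p:envelope}, by showing that $E$ is absolutely continuous on $\Theta$ and that its derivative (where it exists) agrees almost everywhere with the integrand. First I would invoke Proposition \ref{p:envelope} to conclude that, under the stated hypotheses, $V$ --- and hence $E(\theta)=V(X_0,\theta)-R(X_0,\theta)$ --- is Lipschitz continuous on every compact subinterval of $(\theta_-,\theta_+)$. A Lipschitz function on an interval is absolutely continuous, so by the fundamental theorem of calculus for Lebesgue integrals, $E(\theta)-E(\theta')=\int_{\theta'}^\theta E'(s)\,ds$, where $E'(s)$ exists for Lebesgue-almost every $s$.

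The next step is to identify $E'(s)$ with $\frac{U_\theta(x(s),s)}{\varphi(x(s))}$ at almost every $s$. By the last part of Proposition \ref{p:envelope}, at every $s$ where $E$ is differentiable the set $\left\{\frac{U_\theta(x,s)}{\varphi(x)}: x\in\Xstar(s)\right\}$ is a singleton, so its unique value equals $E'(s)$; in particular it does not depend on which selection $x(s)\in\Xstar(s)$ is used. Since $E$ is differentiable almost everywhere, for almost every $s$ we have $\frac{U_\theta(x(s),s)}{\varphi(x(s))}=E'(s)$ for \emph{any} measurable selection $x(\cdot)$ from $\Xstar(\cdot)$. Substituting this into the integral from the previous paragraph yields the claimed identity $E(\theta)-E(\theta')=\int_{\theta'}^\theta \frac{U_\theta(x(s),s)}{\varphi(x(s))}\,ds$.

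The one genuinely delicate point --- and the step I expect to require the most care --- is measurability: one must check that a selection $x(\cdot)$ from the set-valued map $s\mapsto\Xstar(s)$ can be chosen so that $s\mapsto \frac{U_\theta(x(s),s)}{\varphi(x(s))}$ is Lebesgue measurable, so that the integral is well defined for the selection appearing in the statement. This follows from a measurable selection theorem: $\Xstar(s)=\armax_{x\in\intr(I)}\frac{U(x,s)}{\varphi(x)}$ is a closed-valued map depending measurably (indeed, by the upper semicontinuity of $U$ in $x$ and continuity of $U_\theta$, suitably semicontinuously) on $s$, so the Kuratowski--Ryll-Nardzewski selection theorem provides a measurable selection; moreover, because at almost every $s$ the integrand is the same for all selections, the value of the integral is in fact independent of the selection chosen. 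Once measurability is in hand, the remaining work is the routine bookkeeping above.
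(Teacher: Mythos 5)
Your argument is correct and is essentially the intended one: the paper gives no explicit proof of this corollary, relying on the Milgrom--Segal envelope theorem, and your route (Lipschitz $\Rightarrow$ absolutely continuous $\Rightarrow$ fundamental theorem of calculus, with $E'(s)$ identified a.e.\ with the envelope formula, which is selection-independent at points of differentiability) is precisely the standard derivation of the integral form from Proposition \ref{p:envelope}. Your closing observation also disposes of the measurability worry without any selection theorem, since any selection yields an integrand agreeing a.e.\ with the measurable function $E'$.
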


\begin{corollary} \label{c:Gittins1}
Suppose $G(x,\theta) \equiv G(\theta)$ is continuously differentiable and $c(x,\theta)=c(x)$. If $V$ is differentiable at $\theta$ then  
\[V'(\theta)=G'(\theta) \E_{X_0}[e^{-\rho H_{x(\theta)}}],\]
for $x(\theta) \in \Xstar(\theta)$.
\end{corollary}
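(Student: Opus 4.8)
The plan is to read the statement directly off the envelope theorem of Proposition~\ref{p:envelope} after specialising the early stopping reward to the present setting. The key preliminary observation is that, because $c(x,\theta)=c(x)$ does not depend on $\theta$, the resolvent $R(X_0,\theta)=\E_{X_0}\left[\int_0^\infty e^{-\rho t} c(X_t)\,dt\right]$ is a constant, independent of $\theta$. Hence $V(X_0,\cdot)$ and the early stopping reward $E(\cdot)=V(X_0,\cdot)-R(X_0)$ differ by a constant, and so $V'(\theta)=E'(\theta)$ at every point where $V$ is differentiable. It therefore suffices to compute $E'(\theta)$.

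Before applying Proposition~\ref{p:envelope} I would check that its hypotheses are inherited here. With $G(x,\theta)\equiv G(\theta)$ continuously differentiable and $R$ continuous, the early stopping reward $U(x,\theta)=G(\theta)-R(x)$ is continuous in $x$ (in particular upper-semicontinuous) and $U_\theta(x,\theta)=G'(\theta)$ is continuous on $[X_0,x_+]\times\Theta$; the remaining condition $[X_0,x_+]\subseteq\intr(I)$ is the standing hypothesis of that proposition (and is automatic when the boundary points of $I$ are accessible). Under Assumption~\ref{ass:threshold} the set $\Xstar(\theta)$ of optimal thresholds is contained in $[X_0,x_+]$, so every selection $x(\theta)\in\Xstar(\theta)$ satisfies $x(\theta)\geq X_0$.

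Since $V$, equivalently $E$, is differentiable at $\theta$, Proposition~\ref{p:envelope} tells us that $\left\{U_\theta(x,\theta)/\varphi(x):x\in\Xstar(\theta)\right\}$ is a singleton and
\[ E'(\theta)=\frac{U_\theta(x(\theta),\theta)}{\varphi(x(\theta))} \]
for any such selection $x(\theta)$. From $U(x,\theta)=G(\theta)-R(x)$ we get $U_\theta(x,\theta)=G'(\theta)$, which is in fact independent of $x$, so the numerator is simply $G'(\theta)$.

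Finally I would rewrite $1/\varphi(x(\theta))$ as a hitting-time transform. The function $\varphi$ is normalised so that $\varphi(X_0)=1$, and since $x(\theta)\geq X_0$ the formula (\ref{eq:eigenfunction}) gives $\E_{X_0}[e^{-\rho H_{x(\theta)}}]=\varphi(X_0)/\varphi(x(\theta))=1/\varphi(x(\theta))$. Substituting into the expression for $E'(\theta)$ and using $V'(\theta)=E'(\theta)$ yields $V'(\theta)=G'(\theta)\,\E_{X_0}[e^{-\rho H_{x(\theta)}}]$, as claimed. There is no genuine obstacle in this argument; the only points needing care are verifying that the regularity conditions of Proposition~\ref{p:envelope} survive the specialisation, and noting that Assumption~\ref{ass:threshold} puts the optimal threshold at or above $X_0$ so that the increasing eigenfunction branch $\varphi$ is the relevant one in (\ref{eq:eigenfunction}).
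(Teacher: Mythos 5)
Your proposal is correct and follows the same route the paper intends: specialise Proposition \ref{p:envelope} to $U(x,\theta)=G(\theta)-R(x)$, note $U_\theta=G'(\theta)$ and $V'=E'$ since $R(X_0)$ no longer depends on $\theta$, and identify $1/\varphi(x(\theta))$ with $\E_{X_0}[e^{-\rho H_{x(\theta)}}]$ via the normalisation $\varphi(X_0)=1$ and Assumption \ref{ass:threshold}. The paper leaves this derivation implicit, and your write-up supplies exactly the intended steps, including the verification that the hypotheses of the envelope theorem survive the specialisation.
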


\begin{example}
In Example \ref{ex:fullgeo} if $\mu<\rho$,
\[
V'(\theta)= \left\{\begin{array}{ll}
\left(\frac{\theta c_- (\rho-\mu)}{c_-+1}\right)^{c_-}  &\; 0 <\theta < \frac{1+c_-}{(\rho-\mu)c_-} \\
1 &\; \theta \geq \frac{1+c_-}{(\rho-\mu)c_-} 
\end{array}\right\}.
\]
\end{example}

Parameter dependence of stopping problems is a common theme in the literature on multi-armed bandits in which a special case of the general forward problem, which we will call the {\it standard problem}, is studied. 
\begin{definition}
If $G(x,\theta)=G(\theta)$ and $c(x,\theta) \equiv c(x)$ then the problem forward problem (\ref{eq:forward}) is called the {\it standard (forward) problem}.
\end{definition}

The preceding Corollary {\ref{c:Gittins1} is the analogue in a diffusion setting of Lemma 2 in Whittle \cite{Whittle}. As in \cite{Whittle} our setup allows for points of 
non-differentiability and for the possibility of multiple optimal thresholds above the starting point. In contrast, existing results in the diffusion setting, see for instance Karatzas \cite{Karatzas} (Lemma 4.1) make strong assumptions on the diffusion and on $c$ which ensure that $\Xstar(\theta)$ is single valued and that the value function is differentiable in the parameter. 

In general, the optimal stopping thresholds for a parameter are given by a set-valued map $\Xstar:\Theta \rightarrow I$. We will now define the inverse map from the domain of the diffusion to the parameter space. 
\begin{definition} \label{d:gittins}
$\Theta^*(x)$, {\it the indifference map at $x$}, is the set of parameters $\theta \in \Theta$ for which it is optimal to stop immediately when $X_0=x$. 
\end{definition}

\begin{remark}
Under additional assumptions the indifference map can be represented as a monotone function, see Corollary \ref{c:gittinssub}.
\end{remark}

The indifference map $\Theta^*$ is a natural generalisation of the allocation index common in the theory of multi-armed bandits: while we make few assumptions on the reward functions, the multi-armed bandit or dynamic allocation literature is restricted to the standard problem ($c(x,\theta)=c(x)$ and $G(x,\theta)=\theta$), see for instance Gittins and Glazebrook \cite{GittinsGlazebrook}, Whittle \cite{Whittle} and for a diffusion setting closer to the setting of this article, Karatzas \cite{Karatzas} and Alvarez \cite{Alvarez}.  

The following example illustrates our approach to parameter dependent stopping problems and the idea of calculating critical parameter values. Although we focus on the case when the forward problem is indexed by a single parameter, the analysis of forward problems parametrised by several parameters is analogous. 
\begin{example} \label{ex:tax} {\bf A toy model for tax effects} 
Suppose $X$ is a model for the profits of a firm: $X_t=\sigma B_t+x_0$, where $B$ is a standard Brownian Motion and $\sigma>0$. In a tax-free environment a model for the value $V$ of the firm is \[V(\rho,\delta,\sigma)=\sup_{\tau} \E_{X_0}\left[\int_0^\tau e^{-\rho t} X_t dt+e^{-\rho \tau} \delta \right],\] 
where $\delta$ is the salvage value of the firm. $U(x,\delta)=\delta-x/\rho$ is decreasing in $x$ and we look for an optimal stopping threshold below $X_0$. We have $\phi(x)=e^{-\sqrt{2\rho}x/\sigma}$
and $R(x)=\frac{x}{\rho}$. Let $x^*_1$ be the optimal threshold (investment decision) in the tax-free environment above. We calculate $x^*_1=x^*_1(\rho,\delta,\sigma)=\min\{\delta \rho - \frac{\sigma}{\sqrt{2\rho}},X_0\}$. 

Now consider what happens to the value of the firm if profits are taxed. Suppose that profits are taxed at a rate $\theta$, and that the 
tax-base at time $t$ is $X_t-d$, where $d$ represents a tax-deductible depreciation expense (or some other adjustment to the tax base). The post-tax profit of the firm is $Y_t=X_t-\theta(X_t-d)$. The decreasing solution to (\ref{eq:differentialsnice}) is $\phi^Y(x)=\exp\left(\frac{-x\sqrt{2\rho}}{(1-\theta)\sigma)}\right)$ while $R^Y(x)=R((1-\theta)x+\theta d)=\frac{(1-\theta)x+\theta d}{\rho}$. The optimal threshold $(x^*_2)$ for the after-tax investment problem is 
\[x^*_2(\theta,\rho,m,\sigma,d) =\min \left\{ \frac{\rho \delta-\theta d}{1-\theta}-\frac{\sigma(1-\theta)}{\sqrt{2\rho}}, X_0 \right\}.\]

In taxation theory, a tax-rate is neutral if it does not change investment decisions. It is sometimes considered desirable for taxes to be neutral, see for instance Samuelson \cite{SamuelsonTax}. Let $\theta_N$ denote the neutral tax rate in this problem. To compute $\theta_N$ we  solve $x^*_1(\rho,\delta,\sigma)=x^*_2(\theta_N,\rho,\delta,\sigma,d)$ for $\theta_N$, to find \[1-\theta_N(\rho,\delta,\sigma,d)=\sqrt{2\rho} \  \frac{d-\delta \rho}{\sigma}.\]
Finally we check that $\theta_N \in (0,1)$ if and only if 
$0 < d-\delta p<\sigma/\sqrt{2\rho}$. Similarly, given a tax rate $\theta$ we could calculate the depreciation adjustment $d^*(\rho,\delta,\sigma,\theta)$ so that the investment decision is unchanged, which is the idea in Samuelson \cite{SamuelsonTax}. See also Klimmek \cite{RK} for analysis of the relationship between tax levels, risk preferences and decisions under uncertainty. 
\end{example}
In Example \ref{ex:tax}, the optimal thresholds are monotone in one or more of the parameters. In the next section we will derive natural conditions for the monotonicity of threshold strategies $\Xstar$. We will see that if $\Xstar$ is monotone then we can relax Assumption \ref{ass:threshold}. 

\subsection{Monotonicity of the optimal stopping threshold in the parameter value} \label{s:monotone}

We will say that $\Xstar$ is increasing (decreasing) if $x \in \Xstar(\theta)$ and $x' \in \Xstar(\theta')$ with $\theta \leq \theta'$ implies $x \leq (\geq) x'$. 

\begin{definition} $ \ $
\begin{enumerate}
\item[i)] A function $f:\R^2 \rightarrow \R$ is supermodular in $(y,z)$ if for all
$y' > y$, $f(y',z)-f(y,z)$ is increasing in $z$ and if for all $z'>z$, $f(y,z')-f(y,z)$ is increasing in $y$. 
Equivalently, $f$ is supermodular if $f(\max\{y',y\},\max\{z',z\}) + f(\min\{y',y\},\min\{z',z\}) \geq f(y,z)+f(y',z')$ for all $(y,z)$.
\item[ii)] If the inequalities in i) are strict then $f$ is called {strictly supermodular}
\item[iii)] If $-f$ is (strictly) supermodular, $f$ is called (strictly) submodular. 
\item[iv)]  $f$ is (strictly) $\log$-supermodular if $\log(f)$ is (strictly) supermodular.
\end{enumerate}
\end{definition}

\begin{remark} 
Note that if $f$ is twice differentiable then $f$ is supermodular in $(y,z)$ if and only if $f_{yz}(y,z) \geq 0$ for all $y$ and $z$. 
\end{remark} 

The next Lemma follows from a straightforward application of standard techniques in monotone comparative statics to the setting of optimal stopping, see for instance Athey \cite{Athey}.

\begin{lemma} \label{l:monotonexstar}
Suppose that $U(x,\theta)=G(x,\theta)-R(x,\theta) >0$ on $\intr(I) \times \Theta$. If $U$ is $\log$-supermodular then $\Xstar$ is increasing in $\theta$.
\end{lemma}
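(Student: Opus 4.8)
The plan is to reduce the monotonicity of $\Xstar$ to a standard monotone comparative statics statement about the maximizer of a parametrized objective. Recall from Lemma~\ref{l:oneside} that, for a fixed parameter, $\Xstar(\theta) = \argmax_{x \in \intr(I)} \left[ U(x,\theta)/\varphi(x) \right]$. Define $F(x,\theta) = \log\!\left( U(x,\theta)/\varphi(x) \right) = \log U(x,\theta) - \log \varphi(x)$, which is well-defined precisely because we assumed $U > 0$ on $\intr(I) \times \Theta$; since $\log$ is strictly increasing, $\Xstar(\theta) = \argmax_{x} F(x,\theta)$. The key observation is that the subtracted term $\log\varphi(x)$ does not depend on $\theta$, so it contributes nothing to the cross-difference, and hence $F$ is supermodular in $(x,\theta)$ if and only if $\log U$ is supermodular in $(x,\theta)$, i.e. if and only if $U$ is $\log$-supermodular, which is exactly the hypothesis.

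Next I would invoke Topkis's monotonicity theorem (equivalently, the version for ordered sets of maximizers stated in Athey~\cite{Athey}): if $F(x,\theta)$ is supermodular in $(x,\theta)$, then $\argmax_x F(x,\theta)$ is nondecreasing in $\theta$ in the strong set order. Concretely, take $\theta \leq \theta'$, $x \in \Xstar(\theta)$ and $x' \in \Xstar(\theta')$; the goal is to show $x \leq x'$. Suppose for contradiction that $x > x'$. Write the optimality inequalities $F(x,\theta) \geq F(x',\theta)$ and $F(x',\theta') \geq F(x,\theta')$, so that
\[
F(x,\theta) - F(x',\theta) \geq 0 \geq F(x,\theta') - F(x',\theta').
\]
Applying supermodularity to the pair $x' < x$ (the objective's increment over $x' \to x$ is increasing in $\theta$) gives $F(x,\theta') - F(x',\theta') \geq F(x,\theta) - F(x',\theta)$, hence both differences equal $0$, which shows $x' \in \Xstar(\theta)$ and $x \in \Xstar(\theta')$; since $x,x' \in [X_0, x_+]$ this produces the required element ordering and contradicts $x > x'$ unless $x = x'$. (In the strict-supermodular case the contradiction is immediate; in the merely supermodular case one concludes $x'$ is also optimal at $\theta$ and $x$ at $\theta'$, which is exactly what ``increasing in the strong set order'' delivers, and which matches the weak definition of ``$\Xstar$ increasing'' stated just before the lemma.)

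The main thing to be careful about — rather than a deep obstacle — is the bookkeeping around the set-valued nature of $\Xstar$: the definition of ``$\Xstar$ increasing'' given before the lemma is the pairwise condition ``$x \in \Xstar(\theta)$, $x' \in \Xstar(\theta')$, $\theta \leq \theta'$ $\Rightarrow$ $x \leq x'$,'' which is slightly stronger than the strong set order when maximizers are non-unique, so I would either note that the cross-difference argument above yields exactly this pairwise statement, or simply carry the contradiction argument through directly as sketched without detouring through the abstract lattice vocabulary. One should also confirm that $\varphi$ is finite and positive on $\intr(I)$ so that $\log\varphi$ makes sense, which holds since $\varphi$ is the strictly increasing solution of \eqref{eq:differentialsc} normalized by $\varphi(X_0) = 1$ and $\xi(x,y) \in (0,1]$ keeps it positive; no further regularity on $U$ is needed because the argument is purely order-theoretic and does not differentiate.
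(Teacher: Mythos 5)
Your proof is correct and follows essentially the same route as the paper: both pass to $\log U(x,\theta) - \log\varphi(x)$, observe that subtracting the $\theta$-independent term $\log\varphi$ preserves supermodularity, and then run the standard Topkis cross-difference argument on the two optimality inequalities. Your caveat that mere (non-strict) supermodularity only delivers the strong set order rather than the literal pairwise condition stated before the lemma is a fair observation, and it applies equally to the paper's own proof, which likewise concludes only that $\max\{x(\theta),x(\hth)\}$ and $\min\{x(\theta),x(\hth)\}$ are optimizers.
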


\begin{proof}
Suppose that $\theta > \hth$. 
$\Xstar(\theta)$ and $\Xstar(\hth)$ are non-empty by Assumption \ref{ass:threshold}. Define a function $f$ via $f(x,\theta)=u(x,\theta)-\psi(x)$, where $\psi(x)=\log(\varphi(x))$ (recall the definition of $\varphi$, (\ref{eq:eigenfunction})). Then $f$ is also supermodular. Now for any
$x(\theta) \in \Xstar(\theta)$ and $x(\hth) \in \Xstar(\hth)$ we have
\[0 \geq f(\max\{x(\theta),x(\hth)\},\theta)-f(x(\theta),\theta) 
\geq f(x(\hth),\hth)-f(\min\{x(\theta),x(\hth)\},\hth) \geq 0.\]
The first inequality follows by definition of $\Xstar(\theta)$ the second by supermodularity and the last inequality by definition of $\Xstar(\hth)$. Hence there is equality throughout and
$\max\{x(\theta),x(\hth)\} \in \Xstar(\theta)$ and 
$\min\{x(\theta),x(\hth)\} \in \Xstar(\hth)$. It follows that $\Xstar(\theta)$ is increasing in $\theta$.
\end{proof}

\begin{corollary} \label{c:monotonexstar}
If $U$ is $\log$-submodular then $\Xstar(\theta)$ is decreasing in $\theta$. 
\end{corollary}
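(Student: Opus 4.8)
The plan is to obtain this as a near-immediate consequence of Lemma \ref{l:monotonexstar} by reversing the sign of the parameter, so that no fresh monotone comparative statics argument is needed. First I would introduce the interval $\tilde\Theta = \{-\theta : \theta \in \Theta\}$ (with endpoints $-\theta_+$ and $-\theta_-$) and the reparametrised data $\tilde U(x,\tilde\theta) = U(x,-\tilde\theta)$, $\tilde G(x,\tilde\theta)=G(x,-\tilde\theta)$, $\tilde c(x,\tilde\theta)=c(x,-\tilde\theta)$. Since this is the very same family of stopping problems with the parameters merely relabelled, all the standing hypotheses carry over verbatim: $\tilde U>0$ on $\intr(I)\times\tilde\Theta$, and Assumption \ref{ass:threshold} holds for the tilded family, so that its set of optimal thresholds — call it $Y(\tilde\theta)$ — is non-empty and satisfies $Y(\tilde\theta)=\Xstar(-\tilde\theta)$.

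The one substantive point is that reversing the sign of a coordinate interchanges $\log$-submodularity and $\log$-supermodularity. Writing $u=\log U$ and $\tilde u=\log\tilde U$, for $x'>x$ the increment $\tilde u(x',\tilde\theta)-\tilde u(x,\tilde\theta)=u(x',-\tilde\theta)-u(x,-\tilde\theta)$ is \emph{increasing} in $\tilde\theta$ precisely because $u(x',\cdot)-u(x,\cdot)$ is decreasing, which is $\log$-submodularity of $U$; the symmetric condition in the other variable is checked the same way. Hence $\tilde U$ is $\log$-supermodular, Lemma \ref{l:monotonexstar} applies to the tilded family, and $Y$ is increasing in $\tilde\theta$. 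Translating back: given $\theta\le\theta'$, set $\tilde\theta=-\theta\ge-\theta'=\tilde\theta'$; then $x\in\Xstar(\theta)=Y(-\theta)$ and $x'\in\Xstar(\theta')=Y(-\theta')$ with $-\theta'\le-\theta$ force $x'\le x$, i.e. $\Xstar$ is decreasing in $\theta$.

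I do not expect a genuine obstacle; the only step deserving a line of justification is that the hypotheses of Lemma \ref{l:monotonexstar} — in particular non-emptiness of the threshold sets (Assumption \ref{ass:threshold}) and strict positivity of $U$ — really are inherited by the reparametrised family, which is immediate since both families describe identical optimisation problems. If a self-contained argument is preferred instead, one simply reruns the chain in the proof of Lemma \ref{l:monotonexstar} with $f(x,\theta)=u(x,\theta)-\psi(x)$ now submodular: for $\theta>\hth$, $x(\theta)\in\Xstar(\theta)$ and $x(\hth)\in\Xstar(\hth)$,
\[0 \ \ge\ f(\min\{x(\theta),x(\hth)\},\theta)-f(x(\theta),\theta) \ \ge\ f(x(\hth),\hth)-f(\max\{x(\theta),x(\hth)\},\hth) \ \ge\ 0,\]
the outer inequalities being the definitions of the two argmax sets and the middle one submodularity applied to the anti-monotone pairing of $(x(\theta),\theta)$ and $(x(\hth),\hth)$; equality throughout gives $\min\{x(\theta),x(\hth)\}\in\Xstar(\theta)$ and $\max\{x(\theta),x(\hth)\}\in\Xstar(\hth)$, which is the asserted monotonicity.
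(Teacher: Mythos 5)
Your proposal is correct and matches the paper's (implicit) argument: the corollary is stated without proof precisely because it follows from Lemma \ref{l:monotonexstar} by the sign-flip $\theta\mapsto-\theta$, which interchanges $\log$-submodularity and $\log$-supermodularity while leaving the family of stopping problems, the positivity of $U$, and Assumption \ref{ass:threshold} untouched. Your self-contained rerun of the inequality chain with the anti-monotone pairing is also valid and inherits exactly the same (harmless) looseness as the paper's own proof of the lemma, namely passing from the strong-set-order conclusion to the stated monotonicity of $\Xstar$.
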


\begin{remark} \label{r:cutout}
It may be the case that $U(x,\theta)$ takes both strictly positive and negative values on $\intr(I) \times \Theta$. In this case it is never optimal to stop at $x'$ if $U(x',\theta) \leq 0$ and so we need only check supermodularity on the set $\{(x,\theta): U(x,\theta) > 0)\}$.
\end{remark}

Monotonicity of the optimal stopping threshold will play a crucial role in our analysis of inverse optimal stopping problems because it leads to the notion of {\it indexability}. We will say that a stopping problem is {\it indexable} if $\Xstar$ is monotone. 

The notion of indexability is vital in dynamic allocation theory, leading to the natural heuristic of operating the project with the highest allocation index, i.e. `playing-the-leader'. Most recently, Glazebrook et al. \cite{Glazebrook} have generalised the notion of indexability to a large class of resource allocation problems. We note, however, that the definition of indexability presented here does not arise out of a Lagrangian relaxation of an original problem involving only a running reward as is the case in \cite{Glazebrook}. Instead, in the context of a optimal stopping problems, indexability is a natural feature in the monotone comparative statics of parametrised families of forward problems. 

The following assumption will ensure that $\Xstar$ is increasing. There will be a parallel set of results when $\Xstar$ is decreasing. 
\begin{assumption} \label{ass:increasing}
$U(x,\theta)>0$ and $\log(U)$ is supermodular.  
\end{assumption}

\begin{example} \label{ex:bessel2}
Recall Example \ref{ex:ResBessel}. Let $X$ be a three-dimensional Bessel process started at $1$, $\rho=1/2$, $c(x,\theta)=\theta \cos(x)$ and $G \equiv 0$. We have $\varphi(x)=\frac{\sinh(x)}{\sinh(1) x}$. Note that $c(x,\theta)$ is both $\log$-supermodular and $\log$-submodular. Suppose $\theta > 0$.  $\frac{-R(x,\theta)}{\varphi(x)}$ attains its maximum 
at $\hat{x}$ where $\hat{x} \approx 2$ is the smallest solution to the equation $\coth(x)(x \cos(x)-\sin(x))+x\sin(x)=0$. For $\theta<0$, the maximum is attained at the second smallest root of the same equation and $\hat{x} \approx 5.4$.
\begin{figure}[h]
\begin{center}
\includegraphics[height=4cm,width=7cm]{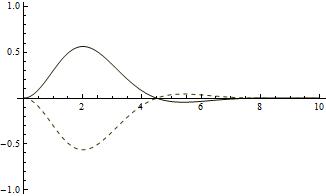}
\caption{$\frac{-R(x,\theta)}{\phi(x)}$ for $\theta=1$ (solid line) and $\theta=-1$ (dashed line).}
\end{center}
\end{figure}
Hence we find that $\Xstar(\theta)$ is decreasing. This does not contradict Lemma \ref{l:casesincreasing} because Assumption \ref{ass:increasing} is violated: The set of points where $-R(x,\theta)$ is positive when $\theta>0$ and stopping is feasible coincides with the set of points where $-R(x,\theta)$ is negative (and stopping is therefore not feasible) when $\theta<0$. Compare Remark \ref{r:cutout}. 
\end{example}

In general, if we remove Assumption \ref{ass:threshold}, a threshold strategy may never be optimal or it may only be optimal on some subset of parameters in $\Theta$. In the following we will show that if $U(x,\theta)$ is $\log$-supermodular then a threshold strategy will be optimal for all parameters in a sub-interval of $\Theta$. 

Let $\theta_R$ be the infimum of those values in $\Theta$ for which $\Xstar(\theta)=\emptyset$. If $\Xstar(\theta)=\emptyset$ for all $\theta \in \Theta$ then we set $\theta_R=\theta_-$. 
\begin{lemma} \label{l:monotonesubdiff}
The set of $\theta \in \Theta$ where $\Xstar(\theta)$ is non-empty (threshold stopping is optimal) forms an interval with end-points $\theta_-$ and $\theta_R$.
\end{lemma}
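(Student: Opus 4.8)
The plan is to show that $S:=\{\theta\in\Theta:\Xstar(\theta)\neq\emptyset\}$ is order-convex (an interval) and then to read off its end-points from the definition of $\theta_R$. Order-convexity is the statement: if $\theta_1<\theta_2<\theta_3$ with $\theta_1,\theta_3\in S$, then $\theta_2\in S$. Granting it, the book-keeping is routine: by the definition of $\theta_R$ as $\inf\{\theta\in\Theta:\Xstar(\theta)=\emptyset\}$ one has $(\theta_-,\theta_R)\cap\Theta\subseteq S$ automatically (a $\theta<\theta_R$ cannot lie in $\{\theta:\Xstar(\theta)=\emptyset\}$), and $S$ cannot meet $(\theta_R,\theta_+)$, since a point $\theta_4\in S$ with $\theta_4>\theta_R$ together with any point of $(\theta_-,\theta_R)\subseteq S$ would, by order-convexity, force $\Xstar$ to be non-empty at some parameter of $\{\theta:\Xstar(\theta)=\emptyset\}$ lying in $(\theta_R,\theta_4)$. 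Hence $S$ is an interval with end-points $\theta_-$ and $\theta_R$; the degenerate cases $S=\emptyset$ (where $\theta_R=\theta_-$ by the stated convention) and $S=\Theta$ (with $\theta_R=\theta_+$) are immediate.

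To prove order-convexity, fix $\theta_1<\theta_2<\theta_3$ with $x_1\in\Xstar(\theta_1)$ and $x_3\in\Xstar(\theta_3)$; by Lemma~\ref{l:monotonexstar}, $x_1\le x_3$, and $[x_1,x_3]\subseteq\intr(I)$. Put $g(y,\vartheta)=U(y,\vartheta)/\varphi(y)$ and $h=\log g=\log U-\log\varphi$; exactly as in the proof of Lemma~\ref{l:monotonexstar}, $h$ is supermodular in $(y,\vartheta)$ because $\log U$ is and $\log\varphi$ depends on $y$ alone. Since $x_1$ maximises $g(\cdot,\theta_1)$, supermodularity applied to the pair $y<x_1$ and the parameters $\theta_1<\theta_2$ gives
\[ h(x_1,\theta_2)-h(y,\theta_2)\ \ge\ h(x_1,\theta_1)-h(y,\theta_1)\ \ge\ 0 \qquad\text{for all }y\le x_1, \]
so $g(y,\theta_2)\le g(x_1,\theta_2)$ there; symmetrically, since $x_3$ maximises $g(\cdot,\theta_3)$, the pair $x_3<y$ with $\theta_2<\theta_3$ yields $g(y,\theta_2)\le g(x_3,\theta_2)$ for all $y\ge x_3$. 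Consequently $\sup_{\intr(I)}g(\cdot,\theta_2)=\sup_{[x_1,x_3]}g(\cdot,\theta_2)$, and this supremum is attained because $g(\cdot,\theta_2)$ is upper-semicontinuous (as $U(\cdot,\theta_2)$ is u.s.c.\ and $\varphi$ is continuous and strictly positive) on the compact interval $[x_1,x_3]$. Thus $\Xstar(\theta_2)\neq\emptyset$.

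The point worth flagging is why one works with order-convexity rather than trying to show directly that $\Xstar(\theta)\neq\emptyset$ for every $\theta<\theta_R$: $\log$-supermodularity controls the family $\theta\mapsto\sup_y g(y,\theta)$ only \emph{relatively}, pinning the maximiser for $\theta_2$ between the maximisers for the two flanking parameters. This converts the attainment question into maximising an upper-semicontinuous function over a \emph{compact} interval, and so side-steps the genuinely delicate issue of ruling out an "escape to the boundary" of $\intr(I)$ at an inaccessible end-point, which is precisely the obstacle a one-sided argument would run into. Everything else — the upper-semicontinuity hypothesis on $U$, the monotonicity of $\Xstar$ from Lemma~\ref{l:monotonexstar}, and the interval book-keeping around $\theta_R$ — is routine.
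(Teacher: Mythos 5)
Your supermodularity computation is sound and is, in substance, the same mechanism as the paper's: the inequality you derive on the side $y\ge x_3$ (the maximiser at the larger parameter dominates everything above it at any smaller parameter) is exactly the paper's key step, obtained by subtracting the supermodularity inequality from the optimality inequality at the larger parameter. The difference is in what you extract from it, and there the argument has a genuine hole. You package the conclusion as order-convexity of $S=\{\theta:\Xstar(\theta)\neq\emptyset\}$, but order-convexity is strictly weaker than the lemma. Concretely, take $S=[\theta_m,\theta_+)$ with $\theta_m>\theta_-$: this set is order-convex, the definition in the paper gives $\theta_R=\inf\{\theta:\Xstar(\theta)=\emptyset\}=\theta_-$, and the lemma's conclusion (an interval with end-points $\theta_-$ and $\theta_R$) fails. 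Nothing in your bookkeeping excludes this configuration: your argument that ``$S$ cannot meet $(\theta_R,\theta_+)$'' explicitly requires a point of $(\theta_-,\theta_R)\subseteq S$, which is vacuous precisely when $\theta_R=\theta_-$ and $S\neq\emptyset$. What the lemma actually needs, and what the paper proves, is the one-sided statement that non-emptiness propagates \emph{downward}: if $\Xstar(\hat{\theta})\neq\emptyset$ then $\Xstar(\theta)\neq\emptyset$ for every $\theta<\hat{\theta}$, which makes $S$ a down-set in $\Theta$ and forces the left end-point to be $\theta_-$.

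The fix is already contained in your own right-hand half. Your reason for retreating to the two-sided statement --- that a one-sided reduction leaves a possibly non-compact interval at an inaccessible boundary --- misreads the setting: the lemma lives in the one-sided framework of thresholds above $X_0$ (Assumption \ref{ass:threshold} and its relaxation in Section \ref{s:monotone}), so after using only the maximiser $x_3\in\Xstar(\theta_3)$ to dominate $\{y\ge x_3\}$, the remaining supremum is over $[X_0,x_3]$, which is compact, and continuity (or upper-semicontinuity) of $x\mapsto U(x,\theta_2)/\varphi(x)$ gives attainment. That is the paper's proof verbatim, and with it your end-point identification becomes correct without needing a second flanking parameter. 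As written, though, the passage from order-convexity to ``end-points $\theta_-$ and $\theta_R$'' is a step that fails.
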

\begin{proof}
Let $\varrho$ denote the right end-point of $I$. Suppose $\Xstar(\hth) \neq \emptyset$ and $\theta \in (\theta_-,\hth)$. We claim that $\Xstar(\theta)\neq \emptyset$.

Fix $\hx \in \Xstar(\hth)$. Then $E(X_0,\hth) = 
u(\hx,\hth) - \psi(\hx)$ and
\begin{equation}
\label{eq:subdiff1}
u(\hx,\hth) - \psi(\hx) \geq  u(x,\hth) - \psi(x),
\hspace{20mm} \forall x < \varrho,
\end{equation}
and for $x = \varrho$ if $\varrho \in \intr(I)$. We write the remainder of the proof
as if we are in the case $\varrho \in \intr(I)$; the case when $\varrho \notin \intr(I)$ involves 
replacing $x \leq \varrho$ with $x < \varrho$.

Fix $\theta<\hth$. We want to show
\begin{equation}
\label{eq:subdiff2}
u(\hx,\theta) - \psi(\hx) \geq  u(x,\theta) - \psi(x),
\hspace{20mm} \forall x \in (\hx, \varrho],
\end{equation}
for then
\[ \sup_{x \leq \varrho} \{ u(x,\theta) - \psi(x) \} =  \sup_{x \leq \hx}  
\{ u(x,\theta) - \psi(x) \},
\]
and since $u(x,\theta) - \psi(x)$ is continuous in $x$ the supremum is 
attained.

Since $u$ is supermodular by assumption we have for $x \in 
(\hx,\varrho]$

\begin{equation}
\label{eq:subdiff3}
u(\hx,\hth) - u(\hx,\theta) \leq  u(x,\hth) - u(x,\theta).
\end{equation}
Subtracting (\ref{eq:subdiff3}) from (\ref{eq:subdiff1}) gives 
(\ref{eq:subdiff2}).
\end{proof}

In the {\it standard case}, determining whether $u(x,\theta)=\log(G(x,\theta)-R(x,\theta))$ is supermodular is simplified by the following result. 

\begin{lemma} \label{l:casesincreasing}
Suppose that the boundary points of $X$ are inaccessible. 
\begin{enumerate}
\item[] If $G \equiv 0$ then $\Xstar(\theta)$ is increasing in $\theta$ if and only if $-c(x,\theta)$ is $\log$-supermodular.
\item[] In the {\it standard case} $G(\theta)-R(x)$ is $\log$-supermodular if and only if $Q(x,\theta)$ is $\log$-supermodular where $Q:I \times \Theta \rightarrow \R$, $Q(x,\theta)=\rho G(\theta)-c(x)$.   
\end{enumerate}
\end{lemma}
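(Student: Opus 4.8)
The plan is to base everything on the observation that, since the boundary points of $X$ are inaccessible, $X$ survives forever, so the $\rho$-resolvent $\mathcal{R}f(x):=\E_x\!\left[\int_0^\infty e^{-\rho t}f(X_t)\,dt\right]=\int_I G_\rho(x,y)f(y)\,m(dy)$ sends the constant function to $1/\rho$, where $G_\rho(x,y)=\kappa\,\varphi(x\wedge y)\phi(x\vee y)$ for a constant $\kappa>0$. When $G\equiv 0$ this immediately gives $U=-R=\mathcal{R}(-c)$; in the standard case linearity gives $\mathcal{R}Q(\cdot,\theta)=\rho G(\theta)\,\mathcal{R}\mathbf{1}-\mathcal{R}c=G(\theta)-R=U(\cdot,\theta)$, so $U(\cdot,\theta)$ is the $\rho$-resolvent of $Q(\cdot,\theta)$. (The same conclusion follows from (\ref{eq:runningr}), which gives $\tfrac12\tfrac{d}{dm}\tfrac{d}{ds}U=\rho U-Q$ with the boundary behaviour of a resolvent.) So the lemma is really a statement about how $\mathcal{R}$ interacts with $\log$-supermodularity.

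For the `if' directions I would combine two ingredients. The first is that $G_\rho$ is $\log$-supermodular in $(x,y)$ --- totally positive of order two: for $x_1<x_2$, $y_1<y_2$ the inequality $G_\rho(x_2,y_2)G_\rho(x_1,y_1)\geq G_\rho(x_1,y_2)G_\rho(x_2,y_1)$ is an equality unless the four points interleave, and in the interleaving case it reduces, after cancellation, to strict monotonicity of $\varphi/\phi$, the fact already used in Lemma \ref{l:triangle}. By Karlin's basic composition formula, $\mathcal{R}$ applied to a nonnegative $\log$-supermodular $f(x,\theta)$ yields a $\log$-supermodular function; with $f=-c$ this shows $-c$ $\log$-supermodular $\Rightarrow$ $U=-R$ $\log$-supermodular $\Rightarrow$ (Lemma \ref{l:monotonexstar}) $\Xstar$ increasing, giving one half of the first statement. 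The second ingredient is that in the standard case both $U=G(\theta)-R(x)$ and $Q=\rho G(\theta)-c(x)$ are additively separable, and a strictly positive $a(\theta)-b(x)$ is $\log$-supermodular iff $\big(a(\theta_2)-a(\theta_1)\big)\big(b(x_2)-b(x_1)\big)\ge 0$ for all $\theta_1<\theta_2$, $x_1<x_2$ --- a one-line expansion --- i.e. iff $a$ and $b$ are comonotone. Hence $U$ is $\log$-supermodular iff $G,R$ are comonotone and $Q$ is $\log$-supermodular iff $G,c$ are comonotone, and the two are linked by stochastic monotonicity of the one-dimensional diffusion: $R=\mathcal{R}c$ inherits the direction of monotonicity of $c$, which yields the `if' direction of the second statement. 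Where $-c$, resp.\ $Q$, fails to be positive nothing is required, by Remark \ref{r:cutout}.

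The converse implications are the main obstacle. In the standard case one needs $G,R$ comonotone $\Rightarrow$ $G,c$ comonotone, i.e.\ (for nonconstant $G$, say increasing) monotonicity of $R=\mathcal{R}c$ should force monotonicity of $c$ --- but the coupling argument transmits monotonicity only from $c$ to $R$, so one must instead run the inverse operator $\rho I-\tfrac12\tfrac{d}{dm}\tfrac{d}{ds}$, which recovers $c$ from $R$, and show it preserves the relevant comonotonicity (or appeal to a standing monotonicity assumption on $c$, under which the claim is immediate). For $G\equiv 0$ the converse `$\Xstar$ increasing $\Rightarrow$ $-c$ $\log$-supermodular' is more delicate still, since $-c$ need not be separable; I would try to argue it contrapositively, showing that a rectangle witnessing failure of $\log$-supermodularity of $-c$ propagates, through the strict total positivity of $G_\rho$, to a failure for $-R$ and then, via the comparative statics of the optimizer, to a pair of parameters at which $\Xstar$ is not increasing. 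Making this propagation precise --- and pinning down the separable-case converse --- is where I expect the real work to lie.
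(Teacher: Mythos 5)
Your treatment of the ``if'' directions is essentially the paper's own argument. The paper writes $R(x,\theta)=\int_I r(x,y)\,c(y,\theta)\,m(dy)$, observes that the resolvent kernel $r$ is $\log$-supermodular, and invokes a composition result of Athey and Jewitt; for the second statement it uses $\E_x[\int_0^\infty e^{-\rho t}(\rho G(\theta)-c(X_t))\,dt]=G(\theta)-R(x)$ and applies the same result to $Q$. Your version via total positivity of order two of $G_\rho$ and Karlin's basic composition formula is the same mechanism, and you are in fact more careful than the paper about \emph{why} the kernel is $\log$-supermodular (the paper asserts it is ``a product of two single-variate functions'', which is only true on each side of the diagonal; your reduction of the TP2 inequality to strict monotonicity of $\varphi/\phi$ is the honest verification). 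Your separable/comonotone detour for the second statement is a harmless, more elementary route to the same forward implication. Note, though, that for the second statement the direct route (apply the composition formula to $Q$ itself, since $U(\cdot,\theta)=\mathcal{R}Q(\cdot,\theta)$) is cleaner than passing through comonotonicity of $G$ with $R$ and then with $c$.

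The genuine divergence is in the converse directions, and here your proposal has a real gap --- but it is a gap the paper closes only by citation. The paper quotes the Athey--Jewitt result as an equivalence: $f$ and $h$ are $\log$-supermodular \emph{if and only if} $\int f(x,s)h(s,\theta)\,ds$ is; taking that equivalence as given, both ``only if'' halves of the lemma follow in one line, with no need for your inverse-operator or propagation arguments. You do not have this tool and correctly identify the converse as the hard part, but you leave it unfinished, so as a self-contained proof the proposal is incomplete. Your instinct that the converse is delicate is well founded: your own observation that the resolvent smooths (so $R$ can be monotone while $c$ is not, e.g.\ $c(y)=y+\e\sin(Ky)$ with $\e K>1$ and $K$ large for Brownian motion) shows that the unqualified ``only if'' of the second statement cannot be obtained by the comonotonicity route, and puts pressure on the cited equivalence itself; the necessity half of Jewitt's theorem holds only under additional hypotheses that neither you nor the paper verifies. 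Separately, for the first statement both you and the paper need a converse to Lemma \ref{l:monotonexstar} (``$\Xstar$ increasing $\Rightarrow U$ $\log$-supermodular'') to pass from monotonicity of the optimizer back to $\log$-supermodularity of $-R$; the paper does not supply this step either, so your flagging of it is apt rather than a defect relative to the paper.
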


\begin{proof}
Athey \cite{Athey} and Jewitt \cite{Jewitt} prove that $f: \R^2 \rightarrow \R^+$ and $h: \R^2 \rightarrow \R^+$ are $\log$-supermodular if and only if $\int_s f(x,s) h(s,\theta) ds$ is $\log$-supermodular. The first statement now follows from the fact (e.g Alvarez \cite{Alvarez} or Rogers \cite{rogers}, V.50) 
that $R(x,\theta)=\int_I r(x,y) c(y,\theta) m(dy)$, where $r(x,y)$ is a product of two single-variate functions and hence $\log$-supermodular. 

For the second statement 
note that
$\E_x\left[\int_0^\infty e^{-\rho t} (\rho G(\theta)-c(X_t))dt \right]=G(\theta)-R(x)$. By the result of Athey and Jewitt, $G(\theta)-R(x)$ is $\log$-supermodular if and only if $Q(x,\theta)$ is $\log$-supermodular.
\end{proof}

\section{Inverse optimal stopping problems}
In this section our aim is to recover diffusions consistent with a given value function for a stopping problem. We recall that when $c\equiv0$ and $G(x,\theta)=(\theta-x)^+$, the problem has the interpretation of recovering price-processes consistent with perpetual American put option prices. 
Consider instead a situation in which an investor is considering whether to invest in a dividend bearing stock that can be liquidated at any time for capital gains. The capital gains depend on a parameter, e.g. a tax or subsidy rate.  In this context, the indifference index has the natural interpretation of the parameter level(s) at which the investor is indifferent about the stock: at the critical level, the optimal policy would be to sell the stock immediately hence there is no expected gain from investment. The question that we ask in this section is whether we can recover an investor's model for the asset price process given his valuation and investment indifference levels for the parameter.

The problem of recovering investor preferences from given information is a natural problem in economics and finance. 
In economics, the question of recovering information about an agent's preferences given their behaviour dates back to Samuelson's work \cite{SamuelsonPref} on revealed preferences. Work on inverse investment problems includes Black \cite{BlackInverse}, Cox and Leland \cite{CoxLeland}, He and Huang \cite{HeHuang} and most recently Cox et al. \cite{CoxInverse}. Rather than calculating an optimal consumption/portfolio policy for a given agent (with a given utility function), the literature in this area aims at recovering utility functions consistent with given consumption and portfolio choices. These `inverse Merton problems' have three fundamental aspects. The first is the specification of a model for an agents' wealth process. The dynamics of the model are given by a dynamic budget constraint and are fully determined given an agent's consumption and investment policy. The second fundamental aspect is an assumption on how the agent values the wealth developing out of his investment and consumption activity. It is assumed that he maximises utility. The third aspect, which is the crux of inverse Merton problems, is to determine the agents' utility, or gain functions given the wealth dynamics and assuming the agent is utility maximising.

As in the inverse Merton problem, three fundamental quantities emerge in the study of the inverse perpetual optimal stopping problems considered here. 1) A model for the underlying random process,
2) valuation of the investment and 3) investor preferences (indifference levels). As we will see below, if we are given only an agent's valuation then the inverse problem is ill-posed. There will then in general be infinitely many models solving the inverse problem, each corresponding to a choice of indifference index. As in the inverse Merton problems where both a model for the wealth process and the assumption of valuation via utility maximisation are given, we require two of the three pieces of information inherent to the perpetual-horizon investment problem to construct a solution: given a value function and (admissible) investor indifference levels, a consistent model is uniquely specified on the domain of the indifference index. 

\subsection{Setup}
As before, let $\Theta$ be an interval with end-points $\theta_-$ and $\theta_+$. Let us assume that we are given $V=\{V(\theta) \ ; \ \theta \in \Theta\}$, $G=\{G(x,\theta) \ ;\  x \in \R, \ \theta \in \Theta\}$, $c=\{c(x) \ ; \ x \in \R\}$ and $X_0$. \newline
$ \ $
\newline
{\bf Inverse Problem}: {\it Find a generalised diffusion $X$ such that $V_X=V$ is consistent with one-sided stopping above $X_0$.}

\begin{remark}
In developing the theory we focus on threshold strategies above $X_0$. There is, as ever, an analogous inverse problem for threshold strategies below $X_0$. See for instance Example \ref{ex:inversegeo1}.
\end{remark}

We will make the following regularity assumption. 

\begin{assumption} \label{ass:inverse}
$V: \Theta \rightarrow \R$ is differentiable and $(x,\theta) \rightarrow G(x,\theta)$ is twice continuously differentiable.
\end{assumption}

As in Hobson and Klimmek \cite{HobsonKlimmek:10}, we will use generalized convex analysis of the $\log$-transformed stopping problem to solve the inverse problem.  

\subsection{u-convex Analysis}
\begin{definition}
Let $A, B$ be subsets of $\R$ and let $u:A \times B \rightarrow \R$ be a bivariate function. The $u$-convex dual of a function $f: A \rightarrow B$ is denoted $f^u$ and defined to be $f^u(z)=\sup_{y \in A} [u(y,z)-f(y)]$.
\end{definition}

\begin{definition}
A function $f$ is $u$-convex if $f^{uu}=(f^{u})^{u}=f$.
\end{definition}

\begin{definition}\label{def:subdifferential2}
The $u$-subdifferential of $f$ at $y$ is defined by
\[\partial^u f(y) = \{ z \in B : f(y) + f^u(z) = u(y,z) \}, \]
or equivalently
\[\partial^u f(y) = \{ z \in B : u(y,z)- f(y) \geq u(\hat{y},z) - 
f(\hat{y}), \forall \hat{y} \in A \} . \]
\end{definition}

If $S$ is a subset of $A$ then we define $\partial^u f(S)$ to be 
the union of $u$-subdifferentials of $f$ over all points in $S$.

\begin{definition}
$f$ is $u$-subdifferentiable at $y$ if $\partial^u f(y) \neq 
\emptyset$. $f$ is $u$-subdifferentiable on $S$ if it is 
$u$-subdifferentiable for all $y \in S$, and $f$ is $u$-subdifferentiable if it is $u$-subdifferentiable on $A$.
\end{definition}

The following envelope-theorem from $u$-convex analysis will be fundamental in 
establishing duality between the value function and the $\log$-transformed eigenfunctions of consistent diffusions. The idea, which goes back to R\"{u}schendorf
\cite{ruschfrech} (Equation 73), is to match 
the gradients of $u(y,z)$ and $u$-convex functions $f(y)$, whenever
$z \in \partial^u f(y)$. The approach was also developed in Gangbo and McCann \cite{mccann} and for applications in Economics by Carlier \cite{carlier}. We refer to 
\cite{carlier} for a proof of the following result.

\begin{proposition} \label{p:uenvelope}
Suppose that $u$ is strictly supermodular and twice continuously differentiable.

If $f$ is a.e differentiable and 
$u$-subdifferentiable. Then there exists a map $\zstar: D_y 
\rightarrow D_z$ such that if $f$ is differentiable at $y$ then
$f(y)=u(y,\zstar(y))-f^u(\zstar(y))$ and
\begin{equation} \label{eq:diffsubdiff2}
f '(y) = u_y(y,\zstar(y)).
\end{equation}

Moreover, $\zstar$ is such that $\zstar(y)$ is non-decreasing.

Conversely, suppose that $f$ is a.e 
differentiable and equal to the integral of its derivative. If 
(\ref{eq:diffsubdiff2}) holds for a non-decreasing function
$\zstar(y)$, then $f$ is $u$-convex and 
$u$-subdifferentiable with $f(y)=u(y,\zstar(y))-f^u(\zstar(y))$.
\end{proposition}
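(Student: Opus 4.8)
The plan is to treat the two directions separately, as the statement is essentially an abstract envelope theorem (one half) together with its converse (a ``recovery'' statement). For the first direction, I would start from the $u$-subdifferentiability hypothesis: for each $y \in D_y$ the set $\partial^u f(y)$ is non-empty, so I may choose a selection $z \mapsto \zstar(y) \in \partial^u f(y)$; the defining identity of the $u$-subdifferential gives immediately $f(y) = u(y,\zstar(y)) - f^u(\zstar(y))$ and the global inequality $u(y,\zstar(y)) - f(y) \geq u(\hat y,\zstar(y)) - f(\hat y)$ for all $\hat y$. Fixing $z = \zstar(y_0)$, the map $\hat y \mapsto u(\hat y,z) - f(\hat y)$ therefore attains its maximum at $\hat y = y_0$; if $f$ is differentiable at $y_0$ this yields the first-order condition $f'(y_0) = u_y(y_0,z) = u_y(y_0,\zstar(y_0))$, which is (\ref{eq:diffsubdiff2}). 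Strict supermodularity of $u$ enters here to guarantee that $u_y(y,\cdot)$ is strictly increasing, so that the first-order condition pins down $\zstar(y_0)$ uniquely (whenever $f'(y_0)$ exists) and, more importantly, forces monotonicity: if $y < y'$ are two points of differentiability, then from $f'(y) = u_y(y,\zstar(y))$, $f'(y') = u_y(y',\zstar(y'))$, together with the cyclical-monotonicity inequality
\[
 u(y,\zstar(y)) - f(y) \geq u(y,\zstar(y')) - f(y'), \qquad
 u(y',\zstar(y')) - f(y') \geq u(y',\zstar(y)) - f(y),
\]
adding these gives $u(y',\zstar(y')) - u(y,\zstar(y')) \geq u(y',\zstar(y)) - u(y,\zstar(y))$, i.e. $\int_y^{y'} [u_y(t,\zstar(y')) - u_y(t,\zstar(y))]\,dt \geq 0$, and strict supermodularity ($u_{yz} > 0$) then forces $\zstar(y') \geq \zstar(y)$. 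At points of non-differentiability of $f$ (a null set by hypothesis) one extends $\zstar$ by monotonicity, e.g. taking a one-sided limit, to get a genuinely non-decreasing function on all of $D_y$.

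For the converse, I would assume $\zstar$ is non-decreasing, (\ref{eq:diffsubdiff2}) holds a.e., and $f$ equals the integral of its derivative. Define the candidate $g(z) = \sup_y [u(y,z) - f(y)]$; the goal is to show $f^{uu} = f$, equivalently that $f$ is $u$-convex, and that $\zstar(y) \in \partial^u f(y)$. The key computation is to verify, for each fixed $y_0$, that $\hat y \mapsto u(\hat y,\zstar(y_0)) - f(\hat y)$ is maximised at $\hat y = y_0$. Writing $\Phi(\hat y) = u(\hat y,\zstar(y_0)) - f(\hat y)$, we have $\Phi'(\hat y) = u_y(\hat y,\zstar(y_0)) - u_y(\hat y,\zstar(\hat y))$ a.e.; since $\zstar$ is non-decreasing and $u_{yz} \geq 0$ (supermodularity, which strict supermodularity implies), $\Phi'(\hat y) \geq 0$ for $\hat y \leq y_0$ and $\Phi'(\hat y) \leq 0$ for $\hat y \geq y_0$, so $\Phi$ increases up to $y_0$ and decreases afterwards — hence $y_0$ is a global maximum. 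Because $f$ is the integral of its derivative, this sign analysis integrates cleanly to the pointwise inequality $\Phi(\hat y) \leq \Phi(y_0)$ without differentiability worries. This says exactly $f(y_0) + f^u(\zstar(y_0)) = u(y_0,\zstar(y_0))$, i.e. $\zstar(y_0) \in \partial^u f(y_0)$, giving both $u$-subdifferentiability and the representation $f(y) = u(y,\zstar(y)) - f^u(\zstar(y))$; $u$-convexity ($f^{uu} = f$) then follows since $f \geq f^{uu}$ always and the representation gives the reverse inequality at every point.

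I expect the main obstacle to be the careful handling of the monotonicity of $\zstar$ and of the exceptional null set in the first direction — in particular, making sure the selection from $\partial^u f(y)$ can be redefined on the bad set to produce a function that is non-decreasing \emph{everywhere} (not just a.e.) while still satisfying the representation identity; this is where one must use that $f$ is locally Lipschitz (hence continuous) and exploit monotone limits, together with closedness of the $u$-subdifferential graph. The analytic lemma that $f(y) + f^u(z) = u(y,z)$ is a closed condition in $(y,z)$ is what lets the limiting $\zstar$ stay a genuine subgradient. Since the paper defers to Carlier \cite{carlier} for the full argument, I would state the result as above and indicate that the proof is a direct transcription of the cyclical-monotonicity / envelope argument there, with strict supermodularity of $u$ playing the role that strict convexity plays in the classical Legendre--Fenchel setting.
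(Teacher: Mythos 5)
The paper does not actually prove this proposition---it states it and defers entirely to Carlier \cite{carlier}---so there is no in-paper argument to compare against; your sketch reconstructs the standard generalized-convexity envelope argument, and it is essentially sound in both directions. Two remarks. First, the displayed pair of inequalities has its arguments swapped: the subdifferential inequalities you want are $u(y,\zstar(y)) - f(y) \geq u(y',\zstar(y)) - f(y')$ and $u(y',\zstar(y')) - f(y') \geq u(y,\zstar(y')) - f(y)$ (the moving variable is the \emph{first} argument, with $z$ held fixed at the subgradient); fortunately their sum is exactly the two-cycle inequality $u(y,\zstar(y)) + u(y',\zstar(y')) \geq u(y',\zstar(y)) + u(y,\zstar(y'))$ that you then use, so the conclusion stands. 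Second, your worry about the exceptional null set in the first direction is unnecessary: the two-cycle inequality together with strict supermodularity shows that \emph{any} $z \in \partial^u f(y)$ and $z' \in \partial^u f(y')$ with $y < y'$ satisfy $z \leq z'$, so an arbitrary selection from the (everywhere non-empty) subdifferential is automatically non-decreasing on all of $D_y$, and the representation $f(y) = u(y,\zstar(y)) - f^u(\zstar(y))$ holds at every point by definition of the subdifferential---differentiability of $f$ is needed only to extract the first-order condition (\ref{eq:diffsubdiff2}), not to define or monotonize $\zstar$. The converse direction as you present it (sign analysis of $\Phi' = u_y(\cdot,\zstar(y_0)) - u_y(\cdot,\zstar(\cdot))$ plus the hypothesis that $f$ integrates its derivative, then $f^{uu} \leq f$ always with equality forced by the attained representation) is exactly the argument in the cited reference.
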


\begin{remark}
If $u$ is strictly $\log$-submodular the conclusion of
Proposition \ref{p:uenvelope} remains
true, except that $z^*(y)$ and $y^*(z)$ are non-increasing.
\end{remark}
The subdifferential $\partial^u f$ may be an interval in which case $z^*(y)$ may be taken to be any element in that interval. By Lemma \ref{l:monotonexstar}, $z^*(y)$ is non-decreasing. We observe that since $u(y,\zstar(y))=f(y)+f^u(\zstar(y))$ we have $u(y^*(z),z)=f(y^*(z))+f(z)$ and $y^*(z) \in \partial^u f^u(z)$ so 
that $y^*$ may be defined directly as an element of $\partial^u f^u$.
If $\zstar$ is strictly increasing then $y^*$ is just the inverse of $\zstar$.

\subsection{Application of $u$-convex Analysis to the Inverse Problem}
We introduce the notation that we will use for our inverse stopping problem framework. The main change over the previous section is that we will highlight dependence on the (unknown) speed measure $m$ and scale function $s$.

We wish to recover a speed measure $m$ and scale function $s$ to construct a diffusion $X^{m,s}=(X^{m,s}_t)_{t \geq 0}$, supported on a domain $I^m \subseteq \R$ such that $V_{X^{m,s}}=V$. Our approach to solving this problem is to recover solutions $\varphi_{m,s}$ and $R_{m,s}$ to the differential equations (\ref{eq:differentialsc}) and (\ref{eq:runningr}) from $V$ and to solve the two equations `in reverse' to recover the speed measure and the scale function. Let $\psi_{m,s}=\log(\varphi_{m,s})$, where $\varphi_{m,s}$ is the increasing solution to (\ref{eq:differentialsc}) with $\varphi_{m,s}(X_0)=1$ and let $R_{m,s}(x)=\E_{x}[\int_0^\infty e^{-\rho t} c(X^{m,s}_t) dt]$. We will say that functions $R_{m,s}$, $\varphi_{m,s}$, $\Xstar$, etc. are {\it consistent} with the inverse problem if there exists a diffusion  $X^{m,s}$ such that $V_{X^{m,s}}=V$. Our approach involves establishing $\psi_{m,s}$ and $\eta_{m,s}(\theta)=\log(V(\theta)-R_{m,s}(X_0))$ as $u_{m,s}$-convex dual functions, where $u_{m,s}(x,\
theta)=\log(G(x,\theta)-R_{m,s}(x))$. Denote the $u_{m,s}$-convex duals of $\varphi_{m,s}$ and $\eta_{m,s}$ by $\varphi_{m,s}^u$ and $\eta_{m,s}^u$ respectively.

The solution to the inverse problem hinges on Proposition \ref{p:uenvelope}.  
Let us briefly highlight the connection between Proposition \ref{p:uenvelope} and Proposition \ref{p:envelope}. Suppose $V=V_{X^{m,s}}$ and that $u_{m,s}(x,\theta)$ is strictly $\log$-supermodular and twice continuously differentiable. Then $\eta_{m,s}(\theta)$ is $u_{m,s}$-convex with $u_{m,s}$-subdifferential $\Xstar$, i.e. $\eta_{m,s}(\theta)=\sup_{x \in int(I^m)}[u_{m,s}(x,\theta)-\psi_{m,s}(x)]=u_{m,s}(\xstar(\theta),\theta)-\psi_{m,s}(\xstar(\theta))$, for some optimal stopping threshold $\xstar(\theta) \in \Xstar(\theta)$. Hence by Proposition \ref{p:uenvelope}, we have \[\frac{V'(\theta)}{V(\theta)-R_{m,s}(X_0)}=\frac{G_\theta(\xstar(\theta))}{G(\xstar(\theta),\theta)-R_{m,s}(\xstar(\theta))}.\] Substituting for $\varphi_{m,s}(\xstar(\theta))$ we find
\[V'(\theta)=\frac{G_\theta(\xstar(\theta),\theta)}{\varphi_{m,s}(\xstar(\theta))},\]
which is the expression in Proposition \ref{p:envelope} when $V$ is differentiable.

In the following, whenever $u_{m,s}$ is strictly supermodular and $V_{X^{m,s}}=V$ we will let $\xstar$ denote the non-decreasing function satisfying $\eta_{m,s}'(\theta)=\frac{\partial}{\partial \theta} u_{m,s}(\xstar(\theta),\theta)$. Then $\Xstar$ is the set of points on the graph of $\xstar$. We will call $\thstar=\xstar^{-1}$ the {\it indifference index}.

\begin{corollary} \label{c:gittinssub}
Suppose $u_{m,s}$ is strictly supermodular and twice continuously differentiable. If $\psi_{m,s}$ is $u_{m,s}$-subdifferentiable then the indifference index at $x \in int(I^m)$ satisfies $\psi_{m,s}'(x)=\frac{\partial}{\partial x}u_{m,s}(x,\thstar(x))$. Moreover, $\thstar$ is non-decreasing. 
\end{corollary}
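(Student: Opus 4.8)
The plan is to obtain Corollary~\ref{c:gittinssub} as a direct application of Proposition~\ref{p:uenvelope} with the roles of the two variables exchanged. In Proposition~\ref{p:envelope} and the surrounding discussion, the function playing the role of the ``primal'' $u$-convex object was $\eta_{m,s}(\theta)$, with $u_{m,s}$-subdifferential $\Xstar$; here we want to treat $\psi_{m,s}(x)$ as the primal object. The key observation is that $\psi_{m,s}$ and $\eta_{m,s}$ are a $u_{m,s}$-convex dual pair: from the paragraph preceding the corollary, $\eta_{m,s}(\theta)=\sup_{x}[u_{m,s}(x,\theta)-\psi_{m,s}(x)]=\psi_{m,s}^{u}(\theta)$, and (by the same stopping/duality argument, using Lemma~\ref{l:oneside}) conversely $\psi_{m,s}(x)=\sup_{\theta}[u_{m,s}(x,\theta)-\eta_{m,s}(\theta)]=\eta_{m,s}^{u}(x)$, so $\psi_{m,s}=\psi_{m,s}^{uu}$ is $u_{m,s}$-convex. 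In particular $\partial^{u_{m,s}}\psi_{m,s}(x)$ consists exactly of those $\theta$ for which $u_{m,s}(x,\theta)-\psi_{m,s}(x)\geq u_{m,s}(\hat x,\theta)-\psi_{m,s}(\hat x)$ for all $\hat x$, i.e.\ those $\theta$ for which $x$ is an optimal threshold, which (by Definition~\ref{d:gittins} and the remark that $\xstar^{-1}=\thstar$) is precisely $\thstar(x)$.

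With this in hand I would apply Proposition~\ref{p:uenvelope} verbatim, but with $y$ replaced by $x$, $z$ replaced by $\theta$, $f$ replaced by $\psi_{m,s}$, and $u$ replaced by $u_{m,s}$. The hypotheses of that proposition are: $u_{m,s}$ strictly supermodular and twice continuously differentiable (assumed here), and $f=\psi_{m,s}$ a.e.\ differentiable and $u_{m,s}$-subdifferentiable. The $u_{m,s}$-subdifferentiability is a standing hypothesis of the corollary; a.e.\ differentiability of $\psi_{m,s}$ follows because $\psi_{m,s}=\eta_{m,s}^{u}$ is a $u_{m,s}$-convex function and such functions inherit the local Lipschitz/differentiability properties established for the $\log$-value function in Proposition~\ref{p:envelope} (it is a supremum of the smooth family $x\mapsto u_{m,s}(x,\theta)-\eta_{m,s}(\theta)$ over the relevant parameter set, with $u_{m,s}$ twice $C^1$). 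Proposition~\ref{p:uenvelope} then yields a nondecreasing selection, which I identify with $\thstar$ via the subdifferential computation above, satisfying $\psi_{m,s}'(x)=\frac{\partial}{\partial x}u_{m,s}(x,\thstar(x))$ wherever $\psi_{m,s}$ is differentiable at $x$. Since the corollary asserts the identity for $x\in\intr(I^m)$, I would additionally note (again from Proposition~\ref{p:uenvelope}) that on the interior $\psi_{m,s}$ is locally Lipschitz hence differentiable a.e., and that $\thstar$ being monotone is differentiable a.e., so the stated relation holds at every point of differentiability; the monotonicity (``$\thstar$ is non-decreasing'') is the last assertion of Proposition~\ref{p:uenvelope}.

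The one point requiring a little care — and the step I expect to be the main obstacle — is justifying rigorously that $\psi_{m,s}$ really is the $u_{m,s}$-convex dual of $\eta_{m,s}$ (equivalently, that the representation $\psi_{m,s}(x)=\sup_\theta[u_{m,s}(x,\theta)-\eta_{m,s}(\theta)]$ holds with the supremum attained at $\thstar(x)$), since Proposition~\ref{p:uenvelope} presupposes its input $f$ is $u$-convex and $u$-subdifferentiable rather than merely a given function. This is where the optimal-stopping content enters: for each fixed $x=X_0$, running the forward problem started at $x$ shows $E(x,\theta)=U(x,\theta)/\varphi_{m,s}(x)$ when $x$ is itself the optimal threshold, and for other $\theta$ the optimal threshold $\xstar(\theta)$ gives $\varphi_{m,s}(x)\,E(\theta)=U(\xstar(\theta),\theta)\varphi_{m,s}(x)/\varphi_{m,s}(\xstar(\theta))$; taking logarithms and rearranging yields exactly $\psi_{m,s}(x)=u_{m,s}(\xstar(\theta^\dagger),\theta^\dagger)-\eta_{m,s}(\theta^\dagger)$ at $\theta^\dagger=\thstar(x)$ and the inequality $\psi_{m,s}(x)\geq u_{m,s}(x,\theta)-\eta_{m,s}(\theta)$ for all other $\theta$, by Lemma~\ref{l:oneside}. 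Once that identification is nailed down, the rest is a mechanical transcription of Proposition~\ref{p:uenvelope}.
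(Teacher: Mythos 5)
Your proposal is correct and follows exactly the route the paper intends: the corollary is stated without proof as a direct application of Proposition~\ref{p:uenvelope} with $f=\psi_{m,s}$ and the roles of $x$ and $\theta$ exchanged, together with the identification of $\partial^{u_{m,s}}\psi_{m,s}(x)$ with the set of parameters for which $x$ is an optimal threshold (Lemma~\ref{l:oneside} and Definition~\ref{d:gittins}), which is precisely $\thstar(x)=\xstar^{-1}(x)$. The only simplifications available to you: the a.e.\ differentiability of $\psi_{m,s}$ follows immediately from the monotonicity of $\varphi_{m,s}$ (so $\psi_{m,s}=\log\varphi_{m,s}$ is increasing), and the $u$-convexity of $\psi_{m,s}$ that you flag as the main obstacle is not actually needed, since the corollary assumes $u_{m,s}$-subdifferentiability outright and the forward direction of Proposition~\ref{p:uenvelope} requires nothing more.
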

\subsection{Recovering consistent diffusions}
The following theorem provides necessary and sufficient conditions for a diffusion $X^{m,s}$ to be the solution to the inverse stopping problem. 

\begin{proposition} \label{p:consistency}
$X^{m,s}$ solves the inverse problem if and only if $\varphi_{m,s}$ and $R_{m,s}$ satisfy the following two conditions.
\begin{enumerate}
\item[i)] For all $\theta \in \Theta$, $\displaystyle \psi_{m,s}^u(\theta)=\sup_{x \in int(I^m), \ x \geq X_0}[u_{m,s}(x,\theta)-\psi_{m,s}(x)]$,
\item[ii)] $\psi^u_{m,s}(\theta) =\log(V(\theta)-R_{m,s}(X_0))$.
\end{enumerate}
\end{proposition}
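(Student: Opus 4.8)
The plan is to prove Proposition~\ref{p:consistency} by unwinding the definitions of $V_{X^{m,s}}$, the early stopping reward, and the $u_{m,s}$-convex dual, so that the two displayed conditions become nothing more than a restatement of the optimal stopping characterisation from Lemma~\ref{l:oneside} together with the strong Markov decomposition~(\ref{eq:strongmarkov}). First I would fix a candidate pair $(m,s)$ and write $\varphi = \varphi_{m,s}$, $R = R_{m,s}$, $\psi = \psi_{m,s}$, $u = u_{m,s}$, $I = I^m$, and recall that by construction $\varphi$ solves~(\ref{eq:differentialsc}) with $\varphi(X_0)=1$ and $R$ solves~(\ref{eq:runningr}), so that $\varphi$ is indeed the increasing $\lambda$-eigenfunction and $R$ the resolvent of the running cost $c$ for the diffusion $X^{m,s}$. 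Then $V_{X^{m,s}}(\theta)$ is by definition $\sup_\tau \E_{X_0}[\int_0^\tau e^{-\rho t} c(X^{m,s}_t)\,dt + e^{-\rho\tau} G(X^{m,s}_\tau,\theta)]$, and by the strong Markov decomposition this equals $R(X_0) + E(\theta)$ where $E(\theta) = \sup_\tau \E_{X_0}[e^{-\rho\tau} U(X^{m,s}_\tau,\theta)]$ and $U(x,\theta) = G(x,\theta) - R(x)$.

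The heart of the argument is the identification of $E(\theta)$ with the $u$-convex dual $\psi^u(\theta)$ restricted to thresholds above $X_0$. Under the standing assumption that one-sided stopping above $X_0$ is the relevant regime, Lemma~\ref{l:oneside} tells us that the optimal value of the early stopping problem is attained by a threshold hitting time $H_z$ for some $z \geq X_0$ maximising $U(y,\theta)/\varphi(y)$ over $\intr(I)$, and that this optimal value is exactly $\sup_{x \in \intr(I),\, x \geq X_0} U(x,\theta)/\varphi(x)$. Taking logarithms, $\log E(\theta) = \sup_{x \in \intr(I),\, x\geq X_0}[\log U(x,\theta) - \log\varphi(x)] = \sup_{x \in \intr(I),\, x \geq X_0}[u(x,\theta) - \psi(x)]$, which is precisely the restricted supremum appearing on the right-hand side of condition~(i), i.e. the claim is that $\psi^u(\theta)$ — taken with the domain restriction $x \geq X_0$ — equals $\log E(\theta)$. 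So condition~(i) is not an additional hypothesis but the \emph{definition} of the quantity $\psi^u(\theta)$ in this setting; stating it as a condition makes the bookkeeping transparent, and I would phrase the proof so that~(i) is read as fixing notation. Granting that, $V_{X^{m,s}}(\theta) = R(X_0) + E(\theta) = R(X_0) + \exp(\psi^u(\theta))$, which rearranges to $\psi^u(\theta) = \log(V_{X^{m,s}}(\theta) - R(X_0))$.

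With this in hand both directions are immediate. For the forward direction, if $X^{m,s}$ solves the inverse problem then $V_{X^{m,s}} = V$, so substituting into the last display gives $\psi^u(\theta) = \log(V(\theta) - R(X_0))$, which is condition~(ii); and condition~(i) holds by Lemma~\ref{l:oneside} as just explained. For the converse, if $\varphi$ and $R$ satisfy~(i) and~(ii), then combining them yields $\log(V_{X^{m,s}}(\theta) - R(X_0)) = \psi^u(\theta) = \log(V(\theta) - R(X_0))$ for all $\theta$, whence $V_{X^{m,s}} = V$; here I would note that Assumption~\ref{ass:Gcontinuous} guarantees positivity of the early stopping reward so that the logarithms are well defined and the implication can be reversed. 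One should also record that condition~(i) being phrased with the restriction $x \geq X_0$ is exactly what encodes ``consistent with one-sided stopping above $X_0$'', so that the equivalence is with the inverse problem as stated rather than a two-sided variant.

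The main obstacle I anticipate is not analytical depth but precision about domains and regularity: one must check that $\varphi_{m,s}$ and $R_{m,s}$ as reconstructed from $(m,s)$ genuinely coincide with the eigenfunction and resolvent of the diffusion $X^{m,s}$ (this is where the boundary behaviour Assumption~1 and the integrability Assumption~2 enter, to ensure~(\ref{eq:eigenfunction}),~(\ref{eq:differentialsc}),~(\ref{eq:runningr}) hold with the stated normalisations), and that the supremum in Lemma~\ref{l:oneside} is over $\intr(I^m) \cap [X_0,\infty)$ rather than all of $\intr(I^m)$ — consistent with the hypothesis that stopping above $X_0$ is optimal, so that thresholds below $X_0$ play no role. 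Once the correspondence between $(m,s)$ and $(\varphi_{m,s}, R_{m,s})$ is pinned down, the proposition is essentially a tautology obtained by taking logs in~(\ref{eq:strongmarkov}) and invoking Lemma~\ref{l:oneside}; I would keep the write-up short and direct, emphasising that the content is the reformulation of the optimal stopping value as a $u_{m,s}$-convex dual, which is what makes the subsequent recovery of $m$ and $s$ via Proposition~\ref{p:uenvelope} possible.
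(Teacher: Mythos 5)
Your overall route is the same as the paper's: strong Markov decomposition~(\ref{eq:strongmarkov}), Lemma~\ref{l:oneside}, take logs, identify the result with the $u_{m,s}$-convex dual. But there is a genuine misstep in how you read condition~(i), and it creates a gap in your converse direction. By the paper's definition of the $u$-convex dual, $\psi_{m,s}^u(\theta)=\sup_{x\in \intr(I^m)}[u_{m,s}(x,\theta)-\psi_{m,s}(x)]$ is the supremum over the \emph{whole} domain, with no restriction to $x\geq X_0$. Condition~(i) is therefore not ``the definition of $\psi^u$ in this setting'' but the substantive assertion that the unrestricted supremum coincides with the supremum over $x\geq X_0$ --- equivalently, that the global maximum of $U(\cdot,\theta)/\varphi_{m,s}(\cdot)$ is attained at or above $X_0$, which by Lemma~\ref{l:oneside} is exactly the statement that a one-sided threshold above $X_0$ is optimal for $X^{m,s}$. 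The paper's own chain of equalities displays this as a separate, nontrivial step (restricted sup $=$ unrestricted sup $=\psi^u_{m,s}(\theta)$).

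The consequence for your argument is in the ``if'' direction. You write that (i) and (ii) combine to give $\log(V_{X^{m,s}}(\theta)-R(X_0))=\psi^u(\theta)=\log(V(\theta)-R(X_0))$, but the first equality is precisely what needs justifying: $V_{X^{m,s}}(\theta)-R(X_0)$ equals $\sup_{x\geq X_0}U(x,\theta)/\varphi_{m,s}(x)$ only when one-sided stopping above $X_0$ is optimal. If the global maximiser of $U/\varphi_{m,s}$ lay strictly below $X_0$, your (vacuous) version of (i) together with (ii) could hold while $X^{m,s}$ fails to solve the inverse problem as stated (the optimal strategy would not be a threshold above $X_0$, and the true value need not match $V$). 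Read (i) as a genuine condition on the unrestricted dual and the converse closes: (i) plus Lemma~\ref{l:oneside} gives optimality of one-sided stopping above $X_0$ and hence $E(\theta)=e^{\psi^u_{m,s}(\theta)}$, and (ii) then gives $V_{X^{m,s}}=V$. Your closing remark that the restriction in (i) ``encodes one-sidedness'' is the correct instinct, but it contradicts your earlier claim that (i) is notation-fixing; the proof needs the former reading, not the latter. The domain/regularity issues you flag as the main obstacle are comparatively harmless here.
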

\begin{proof}
If $X^{m,s}$ is consistent with $V$, $G$, $X_0$ and one-sided stopping above $X_0$ then  
\begin{eqnarray*}
\log(V(\theta)-R_{m,s}(X_0))&=&\log(V_{X^{m,s}}(\theta)-R(X^{m,s}_0)) \\
&=& \sup_{x \in int(I^m), \ x \geq X_0}[u_{m,s}(x,\theta)-\psi_{m,s}(x)] \\
&=& \sup_{x \in int(I^m)}[u_{m,s}(x,\theta)-\psi_{m,s}(x)] \\ 
&=& \psi^u_{m,s}(\theta). 
\end{eqnarray*}
On the other hand, if the two conditions are satisfied then we can construct a diffusion $X^{m,s}$ with starting point $X_0$. The first condition implies that one-sided stopping above $X_0$ is optimal while the second condition ensures that $V_{X^{m,s}}=V$.
\end{proof}

It is intuitively clear that a value function contains information about the dynamics of a consistent diffusions above the starting point. If $x \geq X_0$, and the indifference index $\thstar$ is known then the solution to (\ref{eq:differentialsc}) must
satisfy $\varphi(x)=\frac{U(x,\thstar(x))}{V(\thstar(x))-R(X_0)}$. Thus if we can calculate $R(X_0)$ and the indifference index for all $x \geq X_0$ then we can calculate $\varphi$ above the starting point. We can then recover pairs of scale functions and speed measures consistent with the solution above $X_0$ through (\ref{eq:differentialsc}). 

On the other hand, for $x<X_0$, the only information we have is that $\frac{U(x,\thstar(x))}{\varphi(x)}$ does not attain a maximum below the starting point, otherwise $V$ would not be consistent with one-sided stopping above $X_0$. Thus, while we may attempt to specify (unique) diffusion dynamics above $X_0$, we expect there to be a variety of consistent specifications of the diffusion dynamics below the starting point. This is analogous to the situation in Hobson and Ekstr\"{o}m \cite{HobsonEkstroem} where a unique consistent volatility co-efficient is derived below the starting point but there is freedom of choice above the starting point. The situation is similar in Alfonsi and Jourdain \cite{alfonsi2}, where information about the underlying diffusion co-efficient can only be recovered either above or below the starting point, depending on whether perpetual American call or put option prices are given.

The following two examples illustrate the ideas involved in recovering a consistent diffusion in the simplified setting when consistent diffusions are assumed to be either martingales (Example \ref{ex:inversegeo1}) or in natural scale and with additional information about the early stopping reward (Example \ref{ex:additionalinfo}).
\begin{example} \label{ex:inversegeo1}
Let $\Theta=\left(0,\frac{k+1}{k}\right]$ for some positive constant $k$.
Suppose $V(\theta)=(\frac{k\theta}{k+1})^k\frac{\theta}{k+1}+1$, $G(x,\theta)=\theta$, $c(x)=\rho x$ and $X_0=1$. Suppose the inverse problem is restricted to the class of diffusions that are also martingales. Then $s(x)=x$ and $R_{m,s}(x)=x$. We have $u_{m,s}(x,\theta)=\log(\theta-x)$ and calculate 
$\eta_{m,s}^u(x)=\sup_{\theta}[\log(\theta-x)-\log(V(\theta)-1)]=\log(x^{-k})$, where the maximum is attained at $\thstar(x)=\frac{x(k+1)}{k}$. To recover a consistent martingale diffusion on $\R^+$, let us extend the parameter space to $\bar{\Theta}=(0,\infty)$ and set $\thstar(x)=\frac{x(k+1)}{k}$ on $(0,\infty)$. Then we find that $\phi_{m,s}(x)=x^{-k}$ on $(0,\infty)$ is a consistent eigenfunction. It follows
that \[dX_t=\sigma X_t dB_t, \ \ X_0=1\] is consistent with $V$ where $\sigma$
satisfies $\sigma^2=\frac{2\rho}{(k+1/2)^2-1/4}$.
\end{example}

\begin{example} \label{ex:additionalinfo}
Let $\Theta=[1,\infty)$. Recall the decomposition of the forward problem by the strong Markov property (\ref{eq:strongmarkov}). Suppose we are given the optimal early stopping reward
$E(\theta)=e^{\theta^2/2}$ and the early stopping reward function
$U(x,\theta)=e^{\theta x}$ and that $X_0=0$. 
In this example, $\eta(\theta)=\log(E(\theta))$ is known, so we suppress the subscripts $m$ and $s$. We calculate $\sup_\theta[u(x,\theta)-\eta(\theta)]=x^2/2$ where the maximum is attained at $\thstar(x)=x$. Let us suppose that $s(x)=x$ and aim at recovering a (local)-martingale diffusion. On $\Xstar(\Theta)=[1,\infty)$,  the candidate eigenfunction for the diffusion is $\varphi_{m,s}(x)=e^{\eta^u(x)}=e^{x^2/2}$. Solving for $\sigma$ in (\ref{eq:differentialsnice}) we obtain $\sigma(x)=\frac{2 \rho}{1+x^2}$ for $x \in [1,\infty)$. We can now specify a consistent diffusion by extrapolating the indifference index. Let $\bar{\Theta}=(0,\infty)$ and set $\thstar(x)=x$ for $0 \leq x \leq 1$. By Proposition \ref{p:uenvelope}, $\psi_{m,s}$ is $u$-convex on $\Xstar(\bar{\Theta})$ if $\frac{\varphi_{m,s}'(x)}{\varphi_{m,s}(x)}=\thstar(x)=x$. Thus by setting 
$\varphi_{m,s}(x)=e^{x^2/2}$ for $x \in \R^+$ we find that the diffusion with 
dynamics \[dX_t=\sigma(x) dB_t + dL_t, \ \ X_0=0, \ \   \sigma^2(x)=\frac{2 \rho}{1+x^2},\] where $L$ is the local time at $0$, is consistent with $V$.

In general we can choose any increasing function $\thstar$ on $[0,1)$ with $\thstar(1-)=1$ as long as the recovered function $\varphi_{m,s}$ is an eigenfunction for a consistent diffusion. For instance, the choice $\thstar(x)=\frac{3}{4} x^{1/2}$ for $0 \leq x<1$ leads to $\varphi_{m,s}(x)=\exp\left(\frac{x^{3/2}}{2}\right)$ for $0 \leq x < 1$. For this choice of extension and again setting $s(x)=x$, the consistent diffusion co-efficient is
\[\sigma^2(x)=\left\{\begin{array}{ll}
\frac{32 \rho x^{1/2}}{6+9x^{3/2}}  &\; 0 \leq x < 1 \\
\frac{2 \rho}{1+x^2} &\; x \geq 1  
\end{array}\right\}
.\]
Note that for this extension $\varphi'_{m,s}$ jumps at $1$ and since
\[\varphi'_{m,s}(1+)-\varphi'_{m,s}(1-)=2 \rho \varphi_{m,s}(1) m(\{1\}),\]
we have $m(\{1\})=\frac{1}{8\rho}$ (compare Example 1.4.3). Hence the increasing additive functional $\Gamma_u$ includes a multiple of the local time at $1$ and the diffusion $X^m$ is `sticky' at $1$.
\end{example}
For the general case, the main difficulty over the previous simplified examples of inverse problems is having to recover both a speed measure and a non-trivial scale function. This means that we must recover $R_{m,s}$ as well as $\varphi_{m,s}$ to obtain two equations (\ref{eq:differentialsc}), (\ref{eq:runningr}) for the two unknown quantities. 

\subsection{Recovering diffusions through a consistent indifference index}
Suppose that an indifference index $\thstar:I^m \rightarrow \Theta$ is consistent with a solution to the inverse problem, $X^{m,s}$. Then $\xstar=\thstar^{-1}$ is the optimal threshold strategy for the corresponding forward problem and by Proposition \ref{p:envelope}, 
\begin{equation} \label{eq:inversevarphi}
V'(\thstar(x))=\frac{G_\theta(x,\thstar(x))}{\varphi_{m,s}(x)}.
\end{equation}

\begin{lemma} \label{l:inverse-Rrep}
If $\thstar$ is consistent with the inverse problem then all consistent diffusions $X^{m,s}$ satisfy
\[\E_x\left[\int_0^\infty e^{-\rho t} c(X^{m,s}_t) dt\right]=G(x,\thstar(x))+\varphi_{m,s}(x)(R_{m,s}(X_0)-V(\thstar(x)))\]
for all $x \in int(I^m)$.
\end{lemma}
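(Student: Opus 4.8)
The plan is to reduce the claim to the pointwise optimality characterization of Lemma~\ref{l:oneside}. Since $R_{m,s}(x)=\E_x[\int_0^\infty e^{-\rho t}c(X^{m,s}_t)\,dt]$ by definition, the asserted identity is equivalent to
\[
G(x,\thstar(x))-R_{m,s}(x)=\varphi_{m,s}(x)\bigl(V(\thstar(x))-R_{m,s}(X_0)\bigr),
\]
that is, to $U_{m,s}(x,\thstar(x))=\varphi_{m,s}(x)\bigl(V(\thstar(x))-R_{m,s}(X_0)\bigr)$, which is precisely the relation $\varphi_{m,s}(x)=U_{m,s}(x,\thstar(x))/\bigl(V(\thstar(x))-R_{m,s}(X_0)\bigr)$ anticipated in the discussion preceding the lemma. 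So it suffices to establish this last equality for every $x\in\intr(I^m)$.

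First I would unwind what consistency of $\thstar$ buys us: by definition there is a diffusion $X^{m,s}$ with $V_{X^{m,s}}=V$ for which $\xstar=\thstar^{-1}$ is the optimal threshold strategy of the associated forward problem started at $X_0$, with one-sided stopping above $X_0$ being optimal. Fix $x\in\intr(I^m)$ and put $\theta=\thstar(x)$, so that $x=\xstar(\theta)\in\Xstar(\theta)$; i.e. stopping at the first hitting time $H_x$ is optimal for the parameter $\theta$. By Lemma~\ref{l:oneside} this means $y\mapsto U_{m,s}(y,\theta)/\varphi_{m,s}(y)$ attains its global maximum over $\intr(I^m)$ at $y=x$, and — exactly as in the proof of that lemma — the maximal value equals the optimal early stopping reward $E(\theta)$.

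Next I would identify this maximal value with the data. By the strong Markov decomposition (\ref{eq:strongmarkov}) and consistency, $E(\theta)=V_{X^{m,s}}(X_0,\theta)-R_{m,s}(X_0)=V(\theta)-R_{m,s}(X_0)$. Combining with the previous step gives $U_{m,s}(x,\thstar(x))/\varphi_{m,s}(x)=V(\thstar(x))-R_{m,s}(X_0)$, which is the required equality; rearranging it and substituting $U_{m,s}=G-R_{m,s}$ yields the formula in the statement, with the left-hand side read back as $\E_x[\int_0^\infty e^{-\rho t}c(X^{m,s}_t)\,dt]$.

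The only genuinely delicate point will be the bookkeeping around consistency: one has to make sure that ``$\thstar$ consistent'' really delivers $x=\xstar(\thstar(x))\in\Xstar(\thstar(x))$ for all $x$ in the relevant domain, given that $\Xstar$ may be set-valued and the problem is one-sided above $X_0$ (so a priori the statement concerns $x\ge X_0$ and the natural domain of $\thstar$); everything else is a one-line rearrangement. I would phrase the argument so that it applies verbatim to any selection $x\in\Xstar(\thstar(x))$, which also covers the freedom in choosing $\xstar$ when the $u_{m,s}$-subdifferential is an interval.
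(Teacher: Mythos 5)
Your proof is correct, and it takes a slightly more elementary route than the paper's. The paper obtains the identity by combining two \emph{derivative} relations: the envelope identity (\ref{eq:logequiv}), which at $x\in\Xstar(\thstar(x))$ reads $\frac{V'(\thstar(x))}{V(\thstar(x))-R_{m,s}(X_0)}=\frac{G_\theta(x,\thstar(x))}{G(x,\thstar(x))-R_{m,s}(x)}$, together with (\ref{eq:inversevarphi}), $V'(\thstar(x))=G_\theta(x,\thstar(x))/\varphi_{m,s}(x)$; dividing one by the other cancels the derivatives and yields (\ref{eq:inverseR}). You instead go directly to the underlying \emph{value} identity $E(\theta)=U_{m,s}(x,\theta)/\varphi_{m,s}(x)$ for $x\in\Xstar(\theta)$, which is exactly the content of Lemma \ref{l:oneside}, and combine it with the strong Markov decomposition $E(\theta)=V(\theta)-R_{m,s}(X_0)$. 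The two arguments are equivalent in substance --- the remark after Proposition \ref{p:envelope} notes that (\ref{eq:logequiv}) is precisely the quotient of the envelope derivative formula and the Lemma \ref{l:oneside} identity --- but yours buys a little generality: it does not invoke differentiability of $V$ or the envelope theorem, only the optimality characterization, and it makes transparent that the identity holds for \emph{any} selection $x\in\Xstar(\thstar(x))$. You are also right to flag the domain issue: both your argument and the paper's really establish the formula only on $\Xstar(\Theta)$ (equivalently for $x\geq X_0$ with $\thstar(x)\in\Theta$), whereas the lemma is stated for all $x\in\intr(I^m)$; extending below $X_0$ or beyond $\Xstar(\Theta)$ requires the extrapolated index on $\bar{\Theta}$ as in Proposition \ref{p:candidate}, so your caveat identifies a genuine imprecision in the statement rather than a gap in your own argument.
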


\begin{proof}
By the definition of $\thstar$, $x \in \Xstar(\thstar(x))$. It follows by (\ref{eq:logequiv}) that
\[\frac{V'(\thstar(x))}{V(\thstar(x))-R_{m,s}(X_0)}=\frac{G_\theta(x,\thstar(x))}{G(x,\thstar(x))-R_{m,s}(X_0)}\] for all $x \in \Xstar(\Theta)$. Combining this equation with (\ref{eq:inversevarphi}) we have
\begin{equation} \label{eq:inverseR}
R_{m,s}(x)=G(x,\thstar(x))+\varphi_{m,s}(x)(R_{m,s}(X_0)-V(\thstar(x))).
\end{equation}
\end{proof}

Let $\hat{R}_{m,s}(x)$ be the function on $\Xstar(\Theta)$ defined $\hat{R}_{m,s}(x)=G(x,\thstar(x))-\varphi_{m,s}(x) V(\thstar(x))$. 

\begin{lemma} \label{l:hatsolves}
If $R_{m,s}(x)$ given by equation (\ref{eq:inverseR}) solves  (\ref{eq:runningr}) then so does $\hat{R}_{m,s}$.
\end{lemma}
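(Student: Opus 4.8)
The plan is to show that $R_{m,s}$ and $\hat{R}_{m,s}$ differ by a scalar multiple of the eigenfunction $\varphi_{m,s}$, and then to exploit linearity of the generalised second-order operator appearing in (\ref{eq:runningr}) and (\ref{eq:differentialsc}). First I would simply subtract the definition $\hat{R}_{m,s}(x)=G(x,\thstar(x))-\varphi_{m,s}(x)V(\thstar(x))$ from the representation (\ref{eq:inverseR}). The terms $G(x,\thstar(x))$ cancel, as do the terms $\varphi_{m,s}(x)V(\thstar(x))$, leaving the identity
\[
R_{m,s}(x)-\hat{R}_{m,s}(x)=R_{m,s}(X_0)\,\varphi_{m,s}(x), \qquad x\in\Xstar(\Theta),
\]
i.e.\ the two functions differ precisely by the constant $R_{m,s}(X_0)$ times $\varphi_{m,s}$.

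Next, write $\mathcal{L}f:=\tfrac12\frac{d}{dm}\frac{d}{ds}f-\rho f$, so that (\ref{eq:runningr}) reads $\mathcal{L}f=-c$ (recall that in the inverse problem the running reward $c$ is not parameter dependent) and (\ref{eq:differentialsc}) reads $\mathcal{L}f=0$. Since $\mathcal{L}$ is linear, applying it to $\hat{R}_{m,s}=R_{m,s}-R_{m,s}(X_0)\varphi_{m,s}$ gives $\mathcal{L}\hat{R}_{m,s}=\mathcal{L}R_{m,s}-R_{m,s}(X_0)\,\mathcal{L}\varphi_{m,s}$. By hypothesis $R_{m,s}$ solves (\ref{eq:runningr}), so $\mathcal{L}R_{m,s}=-c$; and by construction $\varphi_{m,s}$ is the increasing solution of (\ref{eq:differentialsc}) normalised by $\varphi_{m,s}(X_0)=1$, so $\mathcal{L}\varphi_{m,s}=0$. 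Hence $\mathcal{L}\hat{R}_{m,s}=-c$, which is exactly the statement that $\hat{R}_{m,s}$ solves (\ref{eq:runningr}).

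I do not expect a serious obstacle here: the lemma is essentially a one-line consequence of linearity once the identity $R_{m,s}-\hat{R}_{m,s}=R_{m,s}(X_0)\varphi_{m,s}$ is noted. The only point deserving a word of care is that $\frac{d}{dm}\frac{d}{ds}$ is a generalised operator (the speed measure $m$ need not have a density), so ``solves (\ref{eq:runningr})'' should be understood in the appropriate weak sense used throughout the paper (e.g.\ via the integral/local-time representation of the resolvent, as in the jump condition $\varphi'_{m,s}(x+)-\varphi'_{m,s}(x-)=2\rho\varphi_{m,s}(x)m(\{x\})$ recorded in Example~\ref{ex:additionalinfo}). Since the argument uses only linearity of this operator and the fact that $\varphi_{m,s}$ solves the homogeneous problem in the same sense, the proof goes through verbatim in the generalised setting.
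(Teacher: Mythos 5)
Your proof is correct and takes essentially the same route as the paper: the paper's own proof is the one-line remark that the claim ``follows from the fact that $\varphi_{m,s}$ is a solution to the homogeneous equation (\ref{eq:differentialsc})'', and you have simply made explicit the underlying decomposition $R_{m,s}-\hat{R}_{m,s}=R_{m,s}(X_0)\,\varphi_{m,s}$ and the linearity of $\tfrac12\frac{d}{dm}\frac{d}{ds}-\rho$. Your additional remark about interpreting the generalised operator in the appropriate weak sense is a sensible, if implicit, point of care.
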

\begin{proof}
Follows from the fact that $\varphi_{m,s}$ is a solution to the homogeneous equation (\ref{eq:differentialsc}).
\end{proof}

Given an inverse problem there will in general be many speed measures and scale functions satisfying the conditions in Proposition \ref{p:consistency}. Each solution corresponds to an optimal threshold strategy $\Xstar$. 
By definition, choosing a consistent indifference index is equivalent to choosing a consistent threshold strategy. Thus, rather than searching over all solutions $X^{m,s}$ satisfying the conditions in Proposition \ref{p:consistency}, we can solve inverse problems by specifying a candidate indifference index.
The following verification result provides a set of easily verifiable conditions for $X^{m,s}$ to solve the inverse problem.
\begin{proposition} \label{p:candidate}
$X^{m,s}$ is a solution to an inverse problem if the following conditions are satisfied.
\begin{enumerate}
\item[i)] $u_{m,s}$ is strictly supermodular and twice continuously differentiable and $\varphi_{m,s}$ is differentiable almost everywhere,
\item[ii)] there exists a monotone function $\xstar: \bar{\Theta} \rightarrow I^m$ with inverse $\thstar$ such that $\Theta \subseteq \bar{\Theta}$, $\xstar(\theta) \geq X_0$ and such that whenever $\psi_{m,s}$ is differentiable \[\psi'_{m,s}(x)= \frac{\partial}{\partial x} u_{m,s}(x,\thstar(x)),\] 
\item[iii)] $\eta_{m,s}=\psi_{m,s}^u$.
\end{enumerate}
\end{proposition}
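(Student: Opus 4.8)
The plan is to verify, for the $\varphi_{m,s}$ and $R_{m,s}$ attached to the given diffusion $X^{m,s}$, the two conditions of Proposition~\ref{p:consistency}. Condition~(iii) of the present statement, $\eta_{m,s}=\psi_{m,s}^u$, is, after unwinding the definition $\eta_{m,s}(\theta)=\log(V(\theta)-R_{m,s}(X_0))$, literally condition~(ii) of Proposition~\ref{p:consistency}. So the substance of the argument is to derive condition~(i) of Proposition~\ref{p:consistency}: that for every $\theta\in\Theta$ the supremum defining the $u_{m,s}$-convex dual $\psi_{m,s}^u(\theta)=\sup_{x\in\intr(I^m)}[u_{m,s}(x,\theta)-\psi_{m,s}(x)]$ is attained at a point $x\geq X_0$.

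First I would apply the converse half of Proposition~\ref{p:uenvelope} with $f=\psi_{m,s}$, kernel $u=u_{m,s}$ and selection $z^*=\thstar$. Condition~(i) of the present statement supplies the strict supermodularity and $C^2$ regularity of $u_{m,s}$; the differentiability-a.e.\ of $\varphi_{m,s}$ in~(i), together with the continuity, strict positivity and convexity-in-scale of the increasing eigenfunction $\varphi_{m,s}$, gives, via standard regularity properties of eigenfunctions of generalised diffusions, that $\psi_{m,s}=\log\varphi_{m,s}$ is a.e.\ differentiable and equal to the integral of its derivative; and since $u_{m,s}$ is supermodular the monotone map $\xstar$ of~(ii) is increasing (cf.\ Lemma~\ref{l:monotonexstar}), so $\thstar=\xstar^{-1}$ is non-decreasing and the identity $\psi'_{m,s}(x)=\frac{\partial}{\partial x}u_{m,s}(x,\thstar(x))$ of~(ii) is exactly (\ref{eq:diffsubdiff2}). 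Proposition~\ref{p:uenvelope} then yields that $\psi_{m,s}$ is $u_{m,s}$-convex and $u_{m,s}$-subdifferentiable with $\psi_{m,s}(x)=u_{m,s}(x,\thstar(x))-\psi_{m,s}^u(\thstar(x))$; equivalently, by Definition~\ref{def:subdifferential2}, $\thstar(x)\in\partial^{u_{m,s}}\psi_{m,s}(x)$ for all $x\in\intr(I^m)$.

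Next I would exploit the one-sidedness built into~(ii). Fix $\theta\in\Theta\subseteq\bar{\Theta}$ and put $x_\theta=\xstar(\theta)$, so $x_\theta\geq X_0$ and $\thstar(x_\theta)=\theta$. By the previous step $\theta\in\partial^{u_{m,s}}\psi_{m,s}(x_\theta)$, which by Definition~\ref{def:subdifferential2} means $u_{m,s}(x_\theta,\theta)-\psi_{m,s}(x_\theta)\geq u_{m,s}(\hat x,\theta)-\psi_{m,s}(\hat x)$ for every $\hat x\in\intr(I^m)$; hence the supremum defining $\psi_{m,s}^u(\theta)$ is attained at $x_\theta$. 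Since $x_\theta\geq X_0$,
\[
\psi_{m,s}^u(\theta)=u_{m,s}(x_\theta,\theta)-\psi_{m,s}(x_\theta)=\sup_{x\in\intr(I^m),\ x\geq X_0}[u_{m,s}(x,\theta)-\psi_{m,s}(x)],
\]
which is condition~(i) of Proposition~\ref{p:consistency}. Together with condition~(iii) above this verifies both hypotheses of Proposition~\ref{p:consistency}, so $X^{m,s}$ solves the inverse problem.

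The step I expect to be the main obstacle is the regularity bookkeeping that makes the invocation of Proposition~\ref{p:uenvelope} legitimate: upgrading the bare hypothesis ``$\varphi_{m,s}$ differentiable almost everywhere'' to ``$\psi_{m,s}$ a.e.\ differentiable and equal to the integral of its derivative'', and ensuring that the gradient identity of~(ii) is assumed to hold on all of $\intr(I^m)$ (i.e.\ over the full range $\xstar(\bar{\Theta})$) so that the subdifferential inclusion $\thstar(x)\in\partial^{u_{m,s}}\psi_{m,s}(x)$ is available at each $x_\theta$. Granting that, everything else is a direct unwinding of the definitions together with Propositions~\ref{p:uenvelope} and~\ref{p:consistency}.
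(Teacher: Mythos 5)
Your proposal is correct and follows essentially the same route as the paper: the paper's own proof simply invokes the converse half of Proposition~\ref{p:uenvelope} via conditions~(i) and~(ii) to get $u_{m,s}$-convexity of $\psi_{m,s}$, and then condition~(iii) together with Proposition~\ref{p:consistency}. Your write-up additionally spells out the verification that the supremum defining $\psi_{m,s}^u(\theta)$ is attained at $\xstar(\theta)\geq X_0$ (condition~(i) of Proposition~\ref{p:consistency}), a step the paper leaves implicit, and correctly flags the regularity bookkeeping on $\psi_{m,s}$ as the only point needing care.
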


\begin{proof}
By Proposition \ref{p:uenvelope} and conditions i) and ii)
$\psi_{m,s}$ is $u_{m,s}$-convex. It follows from iii) and Proposition \ref{p:consistency} that $V_{X^{m,s}}=V$. 
\end{proof}

\begin{theorem}
A consistent indifference index determines a unique solution to the inverse problem on $\Xstar(\Theta)$.
\end{theorem}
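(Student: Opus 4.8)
The plan is to show that fixing a consistent indifference index $\thstar$ leaves no freedom in the recovered data on $\Xstar(\Theta)$: both $\varphi_{m,s}$ and $R_{m,s}$ are forced there, and the pair then pins down the speed measure and scale function through (\ref{eq:differentialsc}) and (\ref{eq:runningr}). Existence is not at issue --- since $\thstar$ is consistent by hypothesis, there is at least one solution $X^{m,s}$ with $\xstar=\thstar^{-1}$ its optimal threshold strategy --- so the content is that any two such solutions coincide on $\Xstar(\Theta)$.

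First I would recover $\varphi_{m,s}$. For $x\in\Xstar(\Theta)$ we have $x\in\Xstar(\thstar(x))$ by definition of $\thstar$, so (\ref{eq:inversevarphi}) --- equivalently Proposition \ref{p:envelope} via (\ref{eq:logequiv}), using that $c$ does not depend on $\theta$ so that $V'=E'$ --- gives
\[ \varphi_{m,s}(x)=\frac{G_\theta(x,\thstar(x))}{V'(\thstar(x))}. \]
The right-hand side involves only the given data $G,V$ and the chosen $\thstar$, and consistency forces it to be the strictly positive increasing function that $\varphi_{m,s}$ must be; hence $\varphi_{m,s}$ is uniquely determined on $\Xstar(\Theta)$.

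Next I would recover $R_{m,s}$. Equation (\ref{eq:inverseR}) of Lemma \ref{l:inverse-Rrep} gives, on $\Xstar(\Theta)$,
\[ R_{m,s}(x)=G(x,\thstar(x))+\varphi_{m,s}(x)\bigl(R_{m,s}(X_0)-V(\thstar(x))\bigr), \]
so $R_{m,s}$ is determined up to the single constant $R_{m,s}(X_0)$ multiplying the homogeneous solution $\varphi_{m,s}$. This freedom is immaterial: $R_{m,s}$ solves (\ref{eq:runningr}), being the resolvent of the consistent diffusion, so by Lemma \ref{l:hatsolves} the fully explicit function $\hat{R}_{m,s}(x)=G(x,\thstar(x))-\varphi_{m,s}(x)V(\thstar(x))$ also solves (\ref{eq:runningr}). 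Thus on $\Xstar(\Theta)$ I have in hand a homogeneous solution $\varphi_{m,s}$ of (\ref{eq:differentialsc}) and a particular solution $\hat{R}_{m,s}$ of (\ref{eq:runningr}), both written purely in terms of $G,V,c$ and $\thstar$.

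Finally I would solve the two equations `in reverse' on $\Xstar(\Theta)$: eliminating the speed measure between (\ref{eq:differentialsc}) for $\varphi_{m,s}$ and (\ref{eq:runningr}) for $\hat{R}_{m,s}$ yields a first-order equation for $\log s'$, which determines $s$ up to an irrelevant affine normalisation, after which $m$ is read off from either equation. This recovers $(m,s)$ uniquely on $\Xstar(\Theta)$, which is the claim. I expect the main obstacle to be the regularity bookkeeping around this last step --- confirming that the $\varphi_{m,s}$ obtained from the formula above is a bona fide increasing $\rho$-eigenfunction on all of $\Xstar(\Theta)$, so that the reverse-solving is legitimate there, and treating $\Xstar(\Theta)$ component by component should $\xstar$ fail to be continuous --- together with the (already handled, via Lemma \ref{l:hatsolves}) point that the undetermined constant $R_{m,s}(X_0)$ does not generate distinct diffusions.
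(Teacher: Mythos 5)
Your proposal is correct and follows essentially the same route as the paper's proof: it defines the same two functions (the paper's $f$ and $g$) via (\ref{eq:inversevarphi}) and Lemma \ref{l:hatsolves}, and then recovers $m$ and $s$ by solving (\ref{eq:differentialsc}) and (\ref{eq:runningr}) in reverse on $\Xstar(\Theta)$. Your additional remarks on eliminating the undetermined constant $R_{m,s}(X_0)$ and on the regularity of the reverse-solving step are sensible elaborations of what the paper leaves implicit, not a different argument.
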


\begin{proof}
Suppose $\thstar$ is a consistent indifference index. Define
$f: \Xstar(\Theta) \rightarrow \R$, $f(x)=\frac{G_\theta(x,\thstar(x))}{V'(\thstar(x))}$ and $g: \Xstar(\Theta) \rightarrow \R$, $g(x)=G(x,\thstar(x))-f(x)V(\thstar(x)))$. By (\ref{eq:inversevarphi}), $f=\varphi_{m,s}$ on $\Xstar(\Theta)$ for a consistent diffusion $X^{m,s}$, hence $f$ is a solution to (\ref{eq:differentialsc}) on $\Xstar(\theta)$. Similarly by Lemma \ref{l:hatsolves}, $g$ is a solution to 
(\ref{eq:runningr}) on $\Xstar(\Theta)$. Solving the two equations for $m$ and $s$ we recover the (unique) dynamics of a consistent diffusion on $\Xstar(\Theta)$.
\end{proof}

\begin{example} \label{ex:inversegeo}
Suppose $\Theta=\left(0,\frac{k+1}{k}\right]$ and for a positive constant $k$, $V(\theta)=\left(\frac{k \theta}{k+1}\right)^k \frac{\theta}{k+1}+1$, $X_0=1$, $G(x,\theta)=\theta$ and $c(x)=\gamma x$ where $\gamma$ is another positive constant. We define a family of indifference indices on $(0,1]$ parametrised by $\alpha >0$ via $\theta^*_\alpha(x)=\frac{x^\alpha(k+1)}{k}$. 

We will calculate candidate diffusions using Proposition \ref{p:candidate}. By (\ref{eq:inversevarphi}) we have that for $x \in X^*_\alpha(\Theta)=(0,1]$  a candidate solution to (\ref{eq:differentialsnice}) corresponding to the indifference index $\theta^*_\alpha$ is $\phi^\alpha(x)=x^{-\alpha k}$. Similarly by (\ref{eq:inverseR}) we have $R^\alpha(x)=x^\alpha+\phi^\alpha(x)(R^\alpha_{m,s}(1)-1)$ and so 
$\hat{R}^\alpha_{m,s}(x)=x^\alpha$ is a candidate solution to 
(\ref{eq:runningr}). Then, by equations (\ref{eq:differentialsnice}) and (\ref{eq:runningr}), the corresponding consistent diffusion co-efficients on 
$X^*_\alpha(\Theta)=(0,1]$ are 
\begin{eqnarray*}
\sigma_\alpha^2(x) &=& \frac{2(\rho(1+k) x^2-k\gamma x^{3-\alpha})}{k(1+k) \alpha^2} \\
\mu_\alpha(x)&=&(1+\alpha k) \frac{\rho x(1+k)-k \gamma x^{2-\alpha}}{k(1+k) \alpha^2}-\frac{\rho x}{\alpha k}. \\
 \end{eqnarray*}
Note that $\sigma_\alpha^2(x) \geq 0$ on $(0,1]$ if and only if $x^{1-\alpha} \leq \frac{\rho(1+k)}{k\gamma}$ and hence for a consistent diffusion to exist on $(0,1]$ the problem parameters must satisfy $\alpha \leq 1$ and $\rho+k(\rho-\gamma) \geq 0$. 

To specify a diffusion on $(0,\infty)$ consistent with a given $\alpha \leq 1$ on $(0,1]$, we let
\[\thstar(x)= \left\{\begin{array}{ll}
\theta^*_\alpha(x)  &\; x \in \Xstar(\Theta)=(0,1] \\
\theta^*_3(x) &\; x > 1  
\end{array}\right\}.
\]

The corresponding diffusion $X^{\alpha}$ is given by
\[dX^\alpha_t=\sigma_\alpha(X_t)dB_t+\mu_\alpha(X_t)dt, \ \ X_0=1\]
where 
\[\sigma_\alpha^2(x)=\left\{\begin{array}{ll}
\frac{2(\rho(1+k) x^2-k\gamma x^{3-\alpha})}{k(1+k) \alpha^2}  &\; 0 < x \leq 1 \\
\frac{2(\rho(1+k) x^2-k\gamma)}{9k(1+k)} &\; x > 1  
\end{array}\right\}
\]
and 
\[\mu_\alpha(x)=\left\{\begin{array}{ll}
(1+\alpha k) \frac{\rho x(1+k)-k \gamma x^{2-\alpha}}{k(1+k) \alpha^2}-\frac{\rho x}{\alpha k}  &\; 0< x \leq 1 \\
(1+3k) \frac{\rho x(1+k)-k \gamma x^{-1}}{9k(1+k)}-\frac{\rho x}{3k} &\; x > 1  
\end{array}\right\}
.\]
The particular choice of $\thstar$ on $(1,\infty)$ is convenient because it ensures that the trivial condition $\sigma_\alpha^2(x) \geq 0$ is satisfied for any choice of $\rho$, $k$ and $\gamma$ satisfying $\rho+k(\rho-\gamma) \geq 0$. 

Since both boundary points are inaccessible we have 
$R^\alpha=\hat{R}^\alpha$ or equivalently $R^\alpha(1)=1$.

Note that if we set $\gamma=\rho$ and $\alpha=1$ then we recover Example \ref{ex:inversegeo1}.
\end{example}

\begin{example} \label{ex:inverse2}
Suppose $\Theta=(1,3)$, $V(\theta)=1+\frac{\theta}{3}(\sqrt{3\theta}-\sqrt{\theta/3})$, $G(x,\theta)=\theta x$, $c(x)=1/x$ and $X_0=1$. Furthermore suppose we are given $\thstar(x)=3/x^2$ for $x \in (0,\infty)$. Then $\xstar(\theta)=\sqrt{3/\theta}$ and $\Xstar(\Theta)=(1,\sqrt{3})$. By (\ref{eq:inversevarphi}) we have $\varphi_{m,s}(x)=x^2$ and 
$R_{m,s}(x)=1/x+x^2(R_{m,s}(1)-1)$ so that $\hat{R}_{m,s}(x)=1/x$ is a candidate solution to (\ref{eq:inverseR}). The differential equations (\ref{eq:differentialsc}) and (\ref{eq:runningr}) lead to the following simultaneous equations.
\begin{eqnarray*}
\sigma^2(x)+2 \mu(x)x &=& \rho x^2, \\
\sigma^2(x)-x \mu(x) &=& \rho x^2 - x^2. 
 \end{eqnarray*}
We calculate $\sigma^2(x)=x \mu(x)+\rho x^2-x^2=\rho x^2-2\mu(x)x$ so that 
$\mu(x)=x/3$ and $\sigma^2(x)=x^2(\rho-2/3)$. It follows that we must have $\rho > 2/3$ for a solution to the inverse problem to exist. Provided this condition is satisfied, a solution to the inverse problem is 
\[dX_t=\sqrt{\rho - \frac{2}{3}} X_t dB_t + \frac{X_t}{3}dt, \ \ X_0=1.\] 
Note that the solution to the inverse problem is uniquely specified on $(1,\sqrt{3})$.
\end{example}

\section{Concluding remarks}
The main contribution of this article has been to provide a natural extension of the allocation (Gittins) index based on its role in solving forward and inverse stopping problems. In the context of the forward problem we showed that the idea of an allocation index can be extended naturally from the `standard case' to a general class of optimal stopping problems and that there are natural conditions under which the index is monotone. Indeed, we found that this extension, which we have called the indifference index, is a natural feature of the monotone comparative statics of a family of forward problems.

We showed that the indifference index parametrises solutions to inverse stopping problems. When an investment can be modelled as a perpetual horizon stopping problem, inverse stopping problems can be interpreted as inverse investment problems. In contrast to inverse Merton problems where we are given 1) a wealth process and 2) assume that the problem value arises from utility maximization in order to 3) compute consistent utility (gain) functions, in perpetual horizon inverse investment problems we are again given information about the problem value but are interested in recovering an investor's model for the underlying risky asset rather than the gain functions which are now given. In this context, the index has two natural economic interpretations. For the owner of an investment, the indifference index represents investor preferences with respect to liquidating for a terminal reward or remaining invested for a running reward and the option to liquidate later. For the potential investor, the index represents the parameter levels at which he is indifferent to the investment opportunity.

{\small \bibliography{biblio}}

\end{document}